\theoremstyle{thmstyleone}%
\newtheorem{theorem}{Theorem}
\newtheorem{corollary}{Corollary}
\newtheorem{lemma}{Lemma}
\newtheorem{proposition}{Proposition}%
\theoremstyle{thmstyletwo}%
\newtheorem{example}{Example}%
\newtheorem{remark}{Remark}%
\theoremstyle{thmstylethree}%
\newtheorem{definition}{Definition}%
\begin{document}
	
	\title[Discrete Quaternionic  Gabor systems]{Discrete Quaternionic (Multi-window) Gabor Systems}
	
	
	\author*[]{\fnm{Najib} \sur{Khachiaa}}\email{khachiaa.najib@uit.ac.ma}
	\affil[]{\orgdiv{Laboratory Partial Differential Equations, Spectral Algebra and Geometry, Department of Mathematics}, \orgname{Faculty of Sciences, University Ibn Tofail}, \orgaddress{\city{Kenitra}, \country{Morocco}}}
	
		\abstract{The aim of this work is to study (Multi-window) Gabor systems in the space \(\ell^2(\mathbb{Z} \times \mathbb{Z}, \mathbb{H})\), denoted by $\mathcal{G}(g,L,M,N)$, and defined by:
\[
\left\{ (k_1,k_2)\in \mathbb{Z}^2\mapsto e^{2\pi i \frac{m_1}{M}k_1} g_l(k - nN) e^{2\pi j \frac{m_2}{M}k_2}  \right\}_{l \in \mathbb{N}_L, (m_1, m_2) \in \mathbb{N}_M^2, n \in \mathbb{Z}^2},
\]
where, $L,M,N$ are positive integers, $i,j$ are  the imaginary units in the quaternion algebra, and \( \{g_l\}_{l \in \mathbb{N}_L} \subset \ell^2(\mathbb{Z} \times \mathbb{Z}, \mathbb{H}) \). Special emphasis is placed on the case where the sequences \(g_l\) are real-valued. The questions addressed in this work include the characterization of quaternionic Gabor systems that form frames, the characterization of those that are orthonormal bases, and the admissibility of such systems. We also explore necessary and/or sufficient conditions for Gabor frames. The issue of duality is also discussed. Furthermore, we study the stability of these systems.}
	
	\keywords{Quaternionic frames, Multi-window Discrete Quaternionic Gabor systems, Duality ofquaternionic  Gabor frames, Stability of quaternionic Gabor frames}
	
	\pacs[MSC Classification]{42C15; 42C40; 51F30.}
	
	\maketitle
\section{Introduction and preliminaries}
Gabor systems play a central role in the field of harmonic analysis, particularly in the theory of frames and orthonormal bases. The study of quaternionic Gabor systems, which extend the classical notion of Gabor to the algebra of quaternions, is an emerging topic of growing importance due to their ability to model complex phenomena in higher-dimensional spaces. In this paper, we focus on the structure and properties of quaternionic Gabor systems in the context of the space \( \ell^2(\mathbb{Z} \times \mathbb{Z}, \mathbb{H}) \), where \( \mathbb{H} \) denotes the algebra of quaternions which is a four-dimensional real algebra with unity. In $\mathbb{H}$, $0$ denotes the null element  and $1$ denotes the identity with respect to multiplication. It also includes three so-called imaginary units, denoted by $i,j,k$. i.e.,
$$\mathbb{H}=\{a_0+a_1i+a_2j+a_3k:\; a_0,a_1,a_2,a_3\in \mathbb{R}\},$$
where $i^2=j^2=k^2=-1$, $ij=-ji=k$, $jk=-kj=i$ and $ki=-ik=j$. For each quaternion $q=a_0+a_1i+a_2j+a_3k$, we deifine the conjugate of $q$ denoted by $\overline{q}=a_0-a_1i-a_2j-a_2k \in \mathbb{H}$ and the module of $q$ denoted by $\vert q\vert $ as 
$$\vert q\vert =(\overline{q}q)^{\frac{1}{2}}=(q\overline{q})^{\frac{1}{2}}=\displaystyle{\sqrt{a_0^2+a_1^2+a_2^2+a_3^2}}.$$
For every $q\in \mathbb{H}$, $q^{-1}=\displaystyle{\frac{\overline{q}}{\vert q\vert ^2}}.$

Define \(\ell^2(\mathbb{Z} \times \mathbb{Z}, \mathbb{H}) := \{ f : \mathbb{Z} \times \mathbb{Z} \to \mathbb{H} : \displaystyle{\sum_{j \in \mathbb{Z}^2} |f(j)|^2} < \infty \}\) and \(\langle f, g \rangle := \displaystyle{\sum_{j \in \mathbb{Z}^2} \overline{f(j)} g(j)}\) for \(f, g \in \ell^2(\mathbb{Z} \times \mathbb{Z}, \mathbb{H})\). Then, \(\left( \ell^2(\mathbb{Z} \times \mathbb{Z}, \mathbb{H}), \langle \cdot, \cdot \rangle \right)\) is a right quaternionic Hilbert space with respect to right scalar multiplication. Denote by $\mathbb{B}(\ell^2(\mathbb{Z}\times \mathbb{Z},\mathbb{H})\,)$ the set of all right $\mathbb{H}$-linear bounded operators on $\ell^2(\mathbb{Z}\times \mathbb{Z},\mathbb{H})$.
Denote by $\ell_0(\mathbb{Z}\times \mathbb{Z},\mathbb{H})$ the subspace of $\ell^2(\mathbb{Z}\times \mathbb{Z},\mathbb{H})$ consisting of  sequences with finite support.

Denote by $\mathbb{N}_K:=\{0,1,\ldots,K-1\}$, where $K$ is a positive integer and  Let $L,M,N$ be positive integers. For all \(n=(n_1,n_2) \in \mathbb{Z}^2\), the translation operator with parameter \(nN:=(n_1N,n_2N)\) on \(\ell^2(\mathbb{Z} \times \mathbb{Z}, \mathbb{H})\), denoted by \(T_{nN}\), is the operator \(T_{nN} : \ell^2(\mathbb{Z} \times \mathbb{Z}, \mathbb{H}) \to \ell^2(\mathbb{Z} \times \mathbb{Z}, \mathbb{H})\), defined by \(T_{nN} f(k) = f(k - nN)\) for all \(f \in \ell^2(\mathbb{Z} \times \mathbb{Z}, \mathbb{H})\) and \(k \in \mathbb{Z}^2\). For all $m=(m_1,m_2)\in \mathbb{N}_M\times \mathbb{N}_M$, the modulation operator with parameter $\displaystyle{\frac{m}{M}:=(\frac{m_1}{M},\frac{m_2}{M})}$ on $\ell^2(\mathbb{Z} \times \mathbb{Z}, \mathbb{H})$, denoted by $E_{\frac{m}{M}}$, is the operator $E_{\frac{m}{M}}:\ell^2(\mathbb{Z} \times \mathbb{Z}, \mathbb{H})\rightarrow \ell^2(\mathbb{Z} \times \mathbb{Z}, \mathbb{H})$, defined by $E_{\frac{m}{M}}f(k)=e^{2\pi i \frac{m_1}{M}k_1} f(k) e^{2\pi j \frac{m_2}{M}k_2}$ for all $f\in \ell^2(\mathbb{Z} \times \mathbb{Z}, \mathbb{H})$ and $k=(k_1,k_2)\in \mathbb{Z}^2$. $T_{nN}\in \mathbb{B}(\ell^2(\mathbb{Z} \times \mathbb{Z}, \mathbb{H})\,)$ and is a unitary operator. $E_{\frac{m}{M}}$ is not right $\mathbb{H}$-linear (unlike to the complex case) but we still have $\|E_{\frac{m}{M}}f\|=\|f\|$, for all $f\in \ell^2(\mathbb{Z} \times \mathbb{Z}, \mathbb{H})$.

Given $\{g_l\}_{l\in \mathbb{N}_L}\subset \ell^2(\mathbb{Z}\times \mathbb{Z},\mathbb{H})$, the associated quaternionic Gabor system with parameters $M$ and $N$, denoted by $\mathcal{G}(g,L,M,N)$ is the system generated by the translations and modulations of the windows $g_l$, i.e., 
$$\mathcal{G}(g,L,M,N):=\{E_{\frac{m}{M}}T_{nN}g_l\}_{l\in \mathbb{N}_L,m\in \mathbb{N}_M^2,n\in \mathbb{Z}^2}.$$
Explicitly, $E_{\frac{m}{M}}T_{nN}g_l(k)=e^{2\pi i \frac{m_1}{M}k_1} g_l(k-nN) e^{2\pi j \frac{m_2}{M}k_2}$, for all $l\in \mathbb{N}_L, m=(m_1,m_2)\in \mathbb{N}_M^2, n\in \mathbb{Z}^2$ and $k=(k_1,k_2)\in \mathbb{Z}^2$. 

For all \( h \in \ell^2(\mathbb{Z} \times \mathbb{Z}, \mathbb{H}) \), we associate a function \( \mathcal{M}_h : \mathbb{Z}^2 \to \mathbb{M}(\mathbb{H}) \), where \( \mathbb{M}(\mathbb{H}) \) denotes the set of bi-infinite matrices indexed by pairs of integers, such that for all \( k \in \mathbb{Z}^2 \), the entries of the matrix \( \mathcal{M}_h(k) \) are given by \( (\mathcal{M}_h(k))_{p,n} = h(k + pM - nN) \), where \( p, n \in \mathbb{Z}^2 \). Note that the product, right scalar multiplication, and addition of matrices indexed by pairs are defined in a similar way to the case of usual matrices.	

For  a sequence $\{f(k)\}_{k\in \mathbb{Z}}$ with finite support, we define $\vert supp(f) \vert:=\left\vert \max(supp(f)\,)-\min( supp(f)\,)\right\vert$. Let $g\in \ell_0(\mathbb{Z}\times \mathbb{Z},\mathbb{H})$, we define $\vert supp(g)\vert$ as follows: 
$$\vert supp(g)\vert =\max\left\{ \max_{k_2\in \mathbb{Z}}\vert supp(\,g(.,k_2)\,)\vert\; ,\; \max_{k_1\in \mathbb{Z}}\vert supp(\,g(k_1,.)\,)\vert \right\}.$$

\begin{definition}[Gabor frames]
Let $\mathcal{G}(g,L,M,N)$ be a Gabor system  in $\ell^2(\mathbb{Z}\times \mathbb{Z},\mathbb{H})$. $\mathcal{G}(g,L,M,N)$ is said to be Gabor frame for $\ell^2(\mathbb{Z}\times \mathbb{Z},\mathbb{H})$,  if there exist $0<A\leq B<\infty$ such that for all $h\in \ell^2(\mathbb{Z}\times \mathbb{Z},\mathbb{H})$, the following inequality holds:
$$A\sum_{k\in \mathbb{Z}^2}\vert h(k)\vert^2\leq \displaystyle{\sum_{l\in \mathbb{N}_L}\sum_{n\in \mathbb{Z}^2}\sum_{m\in \mathbb{N}_M^2}\left\vert \langle E_{\frac{m}{M}}T_{nN}g_l,h\rangle \right\vert^2} \leq B\sum_{k\in \mathbb{Z}^2}\vert h(k)\vert^2.$$
\begin{enumerate}
\item If only the upper inequality holds, $\mathcal{G}(g,L,M,N)$ is called a Bessel Gabor system for $\ell^2(\mathbb{Z}\times \mathbb{Z},\mathbb{H})$.
\item If $A=B=1$, $\mathcal{G}(g,L,M,N)$ is called a Parseval Gabor frame for $\ell^2(\mathbb{Z}\times \mathbb{Z},\mathbb{H})$.\\
\end{enumerate}
\end{definition}

\begin{definition}
Let $\mathcal{G}(g,L,M,N)$ be a Bessel Gabor system  in $\ell^2(\mathbb{Z}\times \mathbb{Z},\mathbb{H})$. 
\begin{enumerate}
\item The pre-frame operator of $\mathcal{G}(g,L,M,N)$ is the right $\mathbb{H}$-linear bounded operator denoted by $T$ and  defined as follows: $$\begin{array}{rcl}
T:\ell^2(\mathbb{N}_L\times \mathbb{Z}^2\times \mathbb{N}_M^2,\mathbb{H})&\rightarrow& \ell^2(\mathbb{Z}\times \mathbb{Z},\mathbb{H})\\
q:=\{q_{l,n,m}\}_{l\in \mathbb{N}_L,n\in \mathbb{Z}^2,m\in \mathbb{N}_M^2}&\mapsto& \displaystyle{\sum_{l\in \mathbb{N}_L}\sum_{n\in \mathbb{Z}^2}\sum_{m\in \mathbb{N}_M^2}E_{\frac{m}{M}}T_{nN}g_l.\,q_{l,n,m}}. 
\end{array}$$
\item The transform opeartor of $\mathcal{G}(g,L,M,N)$, denoted by $\theta$, is the adjoint of its pre-frame operator. explicitly $\theta$ is defined as follows:
$$\begin{array}{rcl}
\theta:\ell^2(\mathbb{Z}\times \mathbb{Z},\mathbb{H}) &\rightarrow& \ell^2(\mathbb{N}_L\times \mathbb{Z}^2\times \mathbb{N}_M^2,\mathbb{H})\\
h&\mapsto&\{\langle E_{\frac{m}{M}}T_{nN}g_l,h\rangle\}_{l\in \mathbb{N}_L,n\in \mathbb{Z}^2,m\in \mathbb{N}_M^2}.
\end{array}$$
\item The frame operator of $\mathcal{G}(g,L,M,N)$, denoted by $S$, is the composite of $T$ and $\theta$. explicitly, $S$ is defined as follows: 
$$\begin{array}{rcl}
S:\ell^2(\mathbb{Z}\times \mathbb{Z},\mathbb{H})&\rightarrow& \ell^2(\mathbb{Z}\times \mathbb{Z},\mathbb{H})\\
h&\mapsto&\displaystyle{\sum_{l\in \mathbb{N}_L}\sum_{n\in \mathbb{Z}^2}\sum_{m\in \mathbb{N}_M^2} E_{\frac{m}{M}}T_{nN}g_l\,\langle E_{\frac{m}{M}}T_{nN}g_l,h\rangle}.
\end{array}$$
\end{enumerate}
\end{definition}	
\begin{proposition}\cite{11}
Let $\mathcal{G}(g,L,M,N)$ be a frame for $\ell^2(\mathbb{Z}\times \mathbb{Z},\mathbb{H})$ . Then:
\begin{enumerate}
    \item \( \theta \) is a right $\mathbb{H}$-linear  bounded injective operator.
    \item \( T \) is a right $\mathbb{H}$-linear bounded surjective operator.
    \item \( S \) is a right $\mathbb{H}$-linear bounded, positive, and invertible operator.\\
\end{enumerate}
\end{proposition}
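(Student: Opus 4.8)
The plan is to reduce all three assertions to a single operator inequality for the frame operator $S$. The key preliminary observation is that, by the very definition of the norm on the sequence space $\ell^2(\mathbb{N}_L\times\mathbb{Z}^2\times\mathbb{N}_M^2,\mathbb{H})$, one has
$$\|\theta h\|^2=\sum_{l\in\mathbb{N}_L}\sum_{n\in\mathbb{Z}^2}\sum_{m\in\mathbb{N}_M^2}\left|\langle E_{\frac{m}{M}}T_{nN}g_l,h\rangle\right|^2,$$
so that the defining frame inequality becomes simply $A\|h\|^2\le\|\theta h\|^2\le B\|h\|^2$ for every $h\in\ell^2(\mathbb{Z}\times\mathbb{Z},\mathbb{H})$. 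Everything is extracted from this.

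For part (1), right $\mathbb{H}$-linearity of $\theta$ follows from right $\mathbb{H}$-linearity of the inner product in its second argument: since $\langle f,g\lambda\rangle=\langle f,g\rangle\lambda$, we obtain $\theta(h\lambda)=\theta(h)\lambda$. I would stress that the failure of $E_{\frac{m}{M}}$ to be right $\mathbb{H}$-linear is irrelevant here, because $\theta$ only records inner products and it is the second slot (occupied by $h$) that carries the scalar. Boundedness with $\|\theta\|\le\sqrt{B}$ is the upper frame bound, and injectivity is immediate from the lower bound, since $\theta h=0$ forces $A\|h\|^2\le 0$, hence $h=0$.

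For part (3), I would use that the transform operator is by definition the adjoint of the pre-frame operator, so $\theta=T^{*}$ and hence $\theta^{*}=T$. Consequently $\langle Sh,h\rangle=\langle T\theta h,h\rangle=\langle\theta h,\theta h\rangle=\|\theta h\|^2\ge 0$ and $S^{*}=(T\theta)^{*}=\theta^{*}T^{*}=T\theta=S$, so $S$ is self-adjoint, positive, and bounded with $\|S\|\le\|\theta\|^2\le B$. The frame inequality now reads $A\langle h,h\rangle\le\langle Sh,h\rangle\le B\langle h,h\rangle$, i.e. $AI\le S\le BI$. Invertibility is the substantive point: from $AI\le S\le BI$ one gets $0\le I-B^{-1}S\le(1-\tfrac{A}{B})I$, whence $\|I-B^{-1}S\|\le 1-\tfrac{A}{B}<1$; the Neumann series then converges in $\mathbb{B}(\ell^2(\mathbb{Z}\times\mathbb{Z},\mathbb{H}))$ and yields a bounded inverse of $B^{-1}S$, hence of $S$. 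Part (2) then follows cheaply: $T=\theta^{*}$ is right $\mathbb{H}$-linear and bounded as an adjoint, and the identity $T(\theta S^{-1})=SS^{-1}=I$ exhibits $\theta S^{-1}$ as a bounded right inverse of $T$, so $T$ is surjective.

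The main obstacle is purely the quaternionic functional analysis underpinning the invertibility step: one must know that the norm estimate $\|P\|=\sup_{\|h\|=1}\langle Ph,h\rangle$ for positive self-adjoint $P$ (applied to $P=I-B^{-1}S$) and the Neumann-series criterion for invertibility remain valid for right $\mathbb{H}$-linear bounded operators. Both do, since $\mathbb{B}(\ell^2(\mathbb{Z}\times\mathbb{Z},\mathbb{H}))$ is a real Banach algebra and the relevant spectral facts for self-adjoint operators carry over to the quaternionic setting; these are precisely what the cited reference supplies. A secondary point to keep straight throughout is the side conventions: the inner product is conjugate-linear in the first slot and right-linear in the second, so every linearity and adjoint computation must respect this to avoid spurious conjugation errors.
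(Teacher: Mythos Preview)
Your proof is correct and follows the standard frame-theory route; note, however, that the paper does not supply its own proof of this proposition but simply cites it from reference~\cite{11}, so there is no in-paper argument to compare against. Your reduction of everything to the inequality $A\|h\|^2\le\|\theta h\|^2\le B\|h\|^2$, the Neumann-series invertibility of $S$ via $\|I-B^{-1}S\|\le 1-A/B<1$, and the recovery of surjectivity of $T$ from $T(\theta S^{-1})=I$ are exactly the classical steps, and your remarks on the quaternionic side-conventions (right linearity in the second slot, the real Banach-algebra structure of $\mathbb{B}(\ell^2(\mathbb{Z}\times\mathbb{Z},\mathbb{H}))$) correctly identify the only places where care beyond the complex case is needed.
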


In Section 2, we study discrete quaternionic Gabor systems and present necessary conditions for a quaternionic Gabor system to be a frame for \( \ell^2(\mathbb{Z} \times \mathbb{Z}, \mathbb{H}) \). We focus on the case where the windows take real values and characterize these quaternionic frames through an operator inequality. We also provide sufficient and necessary conditions for a system to be a Parseval quaternionic frame, and further describe the system using matrix-valued functions \( \mathcal{M}_{\cdot} \) associated with the system's windows.

In Section 3, we investigate quaternionic Gabor orthonormal bases for \( \ell^2(\mathbb{Z} \times \mathbb{Z}, \mathbb{H}) \). We characterize quaternionic Gabor systems that form orthonormal bases within the class of Parseval quaternionic frames and explore an interesting property regarding the number of windows in such a basis. We also characterize these systems using matrix-valued functions \( \mathcal{M}_{\cdot} \) and analyze their admissibility in \( \ell^2(\mathbb{Z} \times \mathbb{Z}, \mathbb{H}) \).

Section 4 is devoted to the study of the duality of Gabor frames with real-valued windows, a fundamental aspect in frame theory. Finally, in Section 5, we analyze the stability of Gabor frames with real-valued windows, a key topic for the robustness and reliability of these systems in practical applications.

This paper aims to deepen the understanding of quaternionic Gabor systems and provide new theoretical results that contribute to both frame theory and its applications in contexts where quaternions are a natural tool for modeling complex structures.

Note that all notations introduced in this introduction will be used consistently throughout, and we will not reintroduce them again. 
\section{Discrete quaternionic Gabor frames}	
In this section, we study discrete quaternionic Gabor systems. We first provide necessary conditions for a quaternionic Gabor system to be a frame for \( \ell^2(\mathbb{Z} \times \mathbb{Z}, \mathbb{H}) \). We then investigate these systems in the case where the windows take real values. In this case, we characterize quaternionic Gabor frames by an operator inequality, and give some sufficient and (or) necessary conditions for quaternionic Gabor frames,  and further describe the Parseval quaternionic Gabor frames  using matrix-valued functions \( \mathcal{M}_{\cdot} \) associated with the system's windows. At the end of the section, we conclude with an important theoretical example.

The following proposition gives a necessary condition for a quaternionic Gabor system to be frame.
\begin{proposition}\label{prop1}
Let $g:=\{g_l\}_{l\in \mathbb{N}_L}\subset \ell^2(\mathbb{Z}\times \mathbb{Z},\mathbb{H})$. If $\mathcal{G}(g,L,M,N)$ is a Gabor frame for $\ell^2(\mathbb{Z}\times \mathbb{Z},\mathbb{H})$ with frame bounds $A\leq B$, then for all $k\in \mathbb{Z}^2$, we have:
$$\frac{A}{M^2}\leq \sum_{l\in \mathbb{N}_L}\left(\mathcal{M}_{g_l}(k)\mathcal{M}_{g_l}^*(k)\right)_{0_{\mathbb{Z}^2},0_{\mathbb{Z}^2}}\leq \frac{B}{M^2}.$$
\end{proposition}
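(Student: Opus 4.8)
The plan is to first unwind the matrix quantity appearing in the statement, and then to feed a single, carefully chosen unit vector into the frame inequality.

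First I would compute the \((0_{\mathbb{Z}^2},0_{\mathbb{Z}^2})\) entry of \(\mathcal{M}_{g_l}(k)\mathcal{M}_{g_l}^*(k)\). Since \((\mathcal{M}_{g_l}(k))_{p,n}=g_l(k+pM-nN)\) and the adjoint is the conjugate transpose, \((\mathcal{M}_{g_l}^*(k))_{n,p}=\overline{g_l(k+pM-nN)}\), so the diagonal entry indexed by \((0_{\mathbb{Z}^2},0_{\mathbb{Z}^2})\) is
\[
\left(\mathcal{M}_{g_l}(k)\mathcal{M}_{g_l}^*(k)\right)_{0_{\mathbb{Z}^2},0_{\mathbb{Z}^2}}=\sum_{n\in\mathbb{Z}^2}g_l(k-nN)\,\overline{g_l(k-nN)}=\sum_{n\in\mathbb{Z}^2}|g_l(k-nN)|^2.
\]
Thus the claim is equivalent to the estimate \(\frac{A}{M^2}\leq \sum_{l\in\mathbb{N}_L}\sum_{n\in\mathbb{Z}^2}|g_l(k-nN)|^2\leq \frac{B}{M^2}\), and it is this double sum that I would target.

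Next, fix \(k\in\mathbb{Z}^2\) and apply the frame inequality to the test sequence \(h=\delta_k\) defined by \(\delta_k(r)=1\) if \(r=k\) and \(\delta_k(r)=0\) otherwise, so that \(\sum_{r}|h(r)|^2=1\). Because only the term \(r=k\) survives in \(\langle E_{\frac{m}{M}}T_{nN}g_l,h\rangle=\sum_r \overline{E_{\frac{m}{M}}T_{nN}g_l(r)}\,h(r)\), this inner product equals \(\overline{e^{2\pi i \frac{m_1}{M}k_1}\,g_l(k-nN)\,e^{2\pi j \frac{m_2}{M}k_2}}\). The crucial point is that, although \(E_{\frac{m}{M}}\) is not right \(\mathbb{H}\)-linear and the quaternion product is non-commutative, the module is multiplicative and the two exponential factors are unit quaternions; hence
\[
\left|\langle E_{\frac{m}{M}}T_{nN}g_l,h\rangle\right|^2=|g_l(k-nN)|^2,
\]
independently of \(m\). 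Summing over the \(M^2\) values of \(m\in\mathbb{N}_M^2\) gives \(\sum_{m\in\mathbb{N}_M^2}\left|\langle E_{\frac{m}{M}}T_{nN}g_l,h\rangle\right|^2=M^2\,|g_l(k-nN)|^2\).

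Finally I would insert this into the frame definition and use \(\|h\|^2=1\): the middle term of the frame inequality becomes \(M^2\sum_{l\in\mathbb{N}_L}\sum_{n\in\mathbb{Z}^2}|g_l(k-nN)|^2\), so \(A\leq M^2\sum_{l\in\mathbb{N}_L}\sum_{n\in\mathbb{Z}^2}|g_l(k-nN)|^2\leq B\); dividing by \(M^2\) and reinterpreting the sum via the matrix entry computed above yields exactly the stated double inequality. The only genuinely quaternionic obstacle is the module computation in the middle step: one must check that the order-reversal in the conjugation \(\overline{abc}=\bar c\,\bar b\,\bar a\) together with the failure of right-linearity of \(E_{\frac{m}{M}}\) leaves the modulus untouched, which follows from \(|pq|=|p|\,|q|\) and \(\left|e^{2\pi i \frac{m_1}{M}k_1}\right|=\left|e^{2\pi j \frac{m_2}{M}k_2}\right|=1\). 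Everything else is the classical necessary-condition argument transported to \(\mathbb{Z}^2\).
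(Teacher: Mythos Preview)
Your proposal is correct and follows essentially the same argument as the paper: test the frame inequality against the indicator sequence \(h=\chi_{\{k\}}\), use multiplicativity of the quaternionic modulus to strip the unit-modulus exponentials, and sum over \(m\in\mathbb{N}_M^2\) to produce the factor \(M^2\). The only cosmetic difference is that you unpack the matrix entry \(\left(\mathcal{M}_{g_l}(k)\mathcal{M}_{g_l}^*(k)\right)_{0_{\mathbb{Z}^2},0_{\mathbb{Z}^2}}=\sum_{n}|g_l(k-nN)|^2\) at the start rather than at the end.
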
	
\begin{proof}
Assume that $\mathcal{G}(g,L,M,N)$ is a Gabor frame for $\ell^2(\mathbb{Z}\times \mathbb{Z},\mathbb{H})$ with frame bounds $A\leq B$. Then for all $f\in \ell^2(\mathbb{Z}\times \mathbb{Z},\mathbb{H})$, we have:
$$\begin{array}{rcl}
A\displaystyle{\sum_{k\in \mathbb{Z}^2}\vert f(k)\vert^2} &\leq& \displaystyle{\sum_{l\in \mathbb{N}_L}\sum_{n\in \mathbb{Z}^2}\sum_{m\in \mathbb{N}_M^2}\left\vert\sum_{k\in \mathbb{Z}^2}e^{-2\pi j \frac{m_2}{M}k_2}\overline{g_l(k-nN)}e^{-2\pi i \frac{m_1}{M}k_1}f(k)\right\vert^2}\\
&\leq&B\displaystyle{\sum_{k\in \mathbb{Z}^2}\vert f(k)\vert^2}.
\end{array}$$
Fix $k'\in \mathbb{Z}^2$ and define $f:=\chi_{\{k'\}}$. Then,
$$\begin{array}{rcl}
&&\displaystyle{\sum_{l\in \mathbb{N}_L}\sum_{n\in \mathbb{Z}^2}\sum_{m\in \mathbb{N}_M^2}\left\vert\sum_{k\in \mathbb{Z}^2}e^{-2\pi j \frac{m_2}{M}k_2}\overline{g_l(k-nN)}e^{-2\pi i \frac{m_1}{M}k_1}f(k)\right\vert^2}\\
&=&\displaystyle{\sum_{l\in \mathbb{N}_L}\sum_{n\in \mathbb{Z}^2}\sum_{m\in \mathbb{N}_M^2}\left\vert e^{-2\pi j \frac{m_2}{M}k'_2}\overline{g_l(k'-nN)}e^{-2\pi i \frac{m_1}{M}k'_1}\right\vert^2}\\
&=& \displaystyle{\sum_{l\in \mathbb{N}_L}\sum_{n\in \mathbb{Z}^2}\sum_{m\in \mathbb{N}_M^2}\vert g_l(k'-nN)\vert^2}\\
&=&M^2\left(\displaystyle{\sum_{l\in \mathbb{N}_L}\mathcal{M}_{g_l}(k)\mathcal{M}_{g_l}^*(k)}\right)_{0_{\mathbb{Z}^2},0_{\mathbb{Z}^2}}.
\end{array}$$
Since $\displaystyle{\sum_{k\in \mathbb{Z}^2}\vert f(k)\vert^2=\vert f(k')\vert^2=1}$, the proof is, then, completed.
\end{proof}

The following proposition presents a necessary condition for a Gabor system to be Parseval frame using the system's parameters:
\begin{proposition}\label{prop}
Let $g:=\{g_l\}_{l\in \mathbb{N}_L}\subset \ell^2(\mathbb{Z}\times \mathbb{Z},\mathbb{H})$. If $\mathcal{G}(g,L,M,N)$ is a Parseval frame, then $N^2\leq LM^2$.
\end{proposition}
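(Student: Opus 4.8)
The plan is to extract a quantitative identity from the equality case of Proposition \ref{prop1}. Since a Parseval frame has $A=B=1$, the two inequalities there collapse to the single identity
$$\sum_{l\in\mathbb{N}_L}\left(\mathcal{M}_{g_l}(k)\mathcal{M}_{g_l}^*(k)\right)_{0_{\mathbb{Z}^2},0_{\mathbb{Z}^2}}=\frac{1}{M^2}\qquad\text{for all } k\in\mathbb{Z}^2.$$
Unwinding the diagonal entry exactly as in the proof of Proposition \ref{prop1} --- namely $\left(\mathcal{M}_{g_l}(k)\mathcal{M}_{g_l}^*(k)\right)_{0_{\mathbb{Z}^2},0_{\mathbb{Z}^2}}=\sum_{n\in\mathbb{Z}^2}|g_l(k-nN)|^2$ --- this becomes the pointwise normalization $\sum_{l\in\mathbb{N}_L}\sum_{n\in\mathbb{Z}^2}|g_l(k-nN)|^2=1/M^2$, valid for every $k\in\mathbb{Z}^2$.

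Next I would sum this identity over a fundamental domain of the sublattice $N\mathbb{Z}^2$ in $\mathbb{Z}^2$, the natural choice being $k\in\mathbb{N}_N^2$, which contains exactly $N^2$ points, so the right-hand side contributes $N^2/M^2$. For the left-hand side the key observation is the tiling property: as $k$ runs over $\mathbb{N}_N^2$ and $n$ runs over $\mathbb{Z}^2$, the point $k-nN$ runs over all of $\mathbb{Z}^2$ exactly once. Hence for each fixed $l$,
$$\sum_{k\in\mathbb{N}_N^2}\sum_{n\in\mathbb{Z}^2}|g_l(k-nN)|^2=\sum_{k\in\mathbb{Z}^2}|g_l(k)|^2=\|g_l\|^2,$$
and summing over $l$ yields $\sum_{l\in\mathbb{N}_L}\|g_l\|^2=N^2/M^2$.

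The final step is a norm bound on the windows. The standard self-application argument for Parseval frames carries over to the quaternionic Hilbert space: testing the Parseval identity on a single frame element $E_{\frac{m}{M}}T_{nN}g_l$ and keeping only the diagonal term gives $\|E_{\frac{m}{M}}T_{nN}g_l\|^2\geq \|E_{\frac{m}{M}}T_{nN}g_l\|^4$, so each frame element has norm at most $1$. Since modulations and translations are norm-preserving ($\|E_{\frac{m}{M}}T_{nN}g_l\|=\|g_l\|$, as recorded in the introduction), this forces $\|g_l\|^2\leq 1$ for every $l$. Combining with the previous step, $N^2/M^2=\sum_{l\in\mathbb{N}_L}\|g_l\|^2\leq L$, which is exactly $N^2\leq LM^2$.

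The routine parts are the lattice-tiling bookkeeping and the reuse of the computation from Proposition \ref{prop1}; the one point demanding care is the norm inequality $\|g_l\|\leq 1$, since the Parseval identity together with the elementary evaluation $\langle f,f\rangle=\|f\|^2$ must be checked to behave correctly over $\mathbb{H}$ rather than $\mathbb{C}$. Because $\langle f,f\rangle=\sum_{k}\overline{f(k)}f(k)=\sum_k|f(k)|^2$ remains real and nonnegative, the diagonal term is genuinely $\|f\|^4$ and the noncommutativity of the scalars does not interfere; verifying this is the main thing I would be careful about.
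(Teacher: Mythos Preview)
Your proposal is correct and follows essentially the same route as the paper: apply Proposition~\ref{prop1} with $A=B=1$ to obtain the pointwise identity, sum over a fundamental domain for $N\mathbb{Z}^2$ to compute $\sum_l\|g_l\|^2=N^2/M^2$, and finish with the Parseval-frame norm bound $\|g_l\|\le 1$. Your write-up is in fact a bit more careful than the paper's, which sums over $k\in\mathbb{N}_M^2$ (an apparent typo for $\mathbb{N}_N^2$) and invokes $\|E_{\frac{m}{M}}T_{nN}g_l\|\leq 1$ without the self-application justification you supply.
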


\begin{proof}
Assume $\mathcal{G}(g,L,M,N)$ is a Parseval farame for $\ell^2(\mathbb{Z}\times \mathbb{Z},\mathbb{H})$, then, by Proposition \ref{prop1}, $\displaystyle{\left(\sum_{l\in \mathbb{N}_L}\mathcal{M}_{g_l}(k)\mathcal{M}_{g_l}^*(k)\right)_{0_{\mathbb{Z}^2},0_{\mathbb{Z}^2}}=\frac{1}{M^2}}.$ Then, 
$$\displaystyle{\sum_{l\in \mathbb{N}_L}\sum_{n\in \mathbb{Z}^2}\vert g_l(k-nN)\vert^2=\frac{1}{M^2}}.$$
Then, 
$$\displaystyle{\sum_{l\in \mathbb{N}_L}\sum_{k\in \mathbb{N}_M^2}\sum_{n\in \mathbb{Z}^2}\vert g_l(k-nN)\vert^2=\frac{N^2}{M^2}}.$$
Hence, \begin{equation}
\displaystyle{\sum_{l\in \mathbb{N}_L}\| g_l\|^2=\frac{N^2}{M^2}}.\end{equation}Since $\|g_l\|=\|E_{\frac{m}{M}}T_{nN}g_l\|\leq 1$ for all $l\in \mathbb{N}_L$, where $n\in \mathbb{Z}^2$ and $m\in \mathbb{N}_M^2$ are arbitrary, then $\displaystyle{\frac{N^2}{M^2}}\leq L$.\\
\end{proof}

\begin{remark}
If \( S \) is the frame operator of a Gabor Bessel system \( \{ E_{\frac{m}{M}} T_{nN} g \}_{m \in \mathbb{N}_M^2, n \in \mathbb{Z}^2} \) in \( \ell^2(\mathbb{Z} \times \mathbb{Z}, \mathbb{H}) \), we do not necessarily have the property that \( S E_{\frac{q}{M}} T_{pN} = E_{\frac{q}{M}} T_{pN} S \) for any \( p \in \mathbb{Z}^2 \) and \( q \in \mathbb{N}_M^2 \), unlike the case of complex Gabor systems. This can be seen in the following example:

Let $\{E_{\frac{m}{M}}T_{nN}g\}_{m\in \mathbb{N}_M^2,n\in \mathbb{Z}^2}$ be a Bessel Gabor system and $S$ is its frame operator, $q=(1,1)$, $p=(0,0)$ and $h=\chi_{0_{\mathbb{Z}^2}}\in \ell^2(\mathbb{Z}\times \mathbb{Z}, \mathbb{H})$. We have:
$$\begin{array}{rcl}
\langle E_{\frac{m}{M}}T_{nN}g,E_{(\frac{1}{M},\frac{1}{M})}h\rangle&=&\displaystyle{\sum_{n\in \mathbb{Z}^2}\sum_{m\in \mathbb{N}_M^2}e^{ -2\pi j \frac{m_2}{M}k_2}\overline{g(k-nN)} e^{-2\pi i\frac{m_1}{M}k_1} e^{2\pi i \frac{1}{M}k_1}h(k)e^{2\pi j \frac{1}{M}k_2}}\\
&=& \overline{g(-nN)}.
\end{array}$$
Then, $SE_{(\frac{1}{M},\frac{1}{M})}h(k)=\displaystyle{\sum_{n\in \mathbb{Z}^2}\sum_{m\in \mathbb{N}_M^2} e^{2\pi \frac{m_1}{M}k_1}g(k-nN)e^{2\pi j \frac{m_2}{M}k_2}\; \overline{g(-nN)}}.$
For $k'=(0,1)$, we obtain:
$$SE_{(\frac{1}{M},\frac{1}{M})}h(k')=\displaystyle{\sum_{n\in \mathbb{Z}^2}\sum_{m\in \mathbb{N}_M^2} g(k-nN)e^{2\pi j \frac{m_2}{M}}\; \overline{g(-nN)}}.$$
On the other hand, we have:
$$\begin{array}{rcl}
\langle E_{\frac{m}{M}}T_{nN}g,h\rangle &=& \displaystyle{\sum_{k\in \mathbb{Z}^2}e^{-2\pi j \frac{m_2}{M}k_2}\overline{g(k-nN)}e^{-2\pi i \frac{m_1}{M}k_1} h(k)}\\
&=&\overline{g(-nN)}.
\end{array}$$
Then, $Sh=\displaystyle{\sum_{n\in \mathbb{Z}^2}\sum_{m\in \mathbb{N}_M^2}E_{\frac{m}{M}}T_{nN}g\;\overline{g(-nN)}}$. Then, $$\begin{array}{rcl}
E_{(\frac{1}{M},\frac{1}{M})}Sh(k)&=&e^{2\pi i \frac{1}{M}k_1}Sh(k)e^{2\pi j \frac{1}{M}k_2}\\
&=&e^{2\pi i \frac{1}{M}k_1}\left( \displaystyle{\sum_{n\in \mathbb{Z}^2}\sum_{m\in \mathbb{N}_M^2} e^{2\pi i \frac{m_1}{M}k_1}g(k-nN)e^{2\pi j \frac{m_2}{M}k_2} \overline{g(-nN)}}\right) e^{2\pi j \frac{1}{M}k_2}.
\end{array}$$
For $k'=(0,1)$, we obtain: $$
\begin{array}{rcl}
E_{(\frac{1}{M},\frac{1}{M})}Sh(k')&=&\left(\displaystyle{\sum_{n\in \mathbb{Z}^2}\sum_{m\in \mathbb{N}_M^2} g(k-nN)e^{2\pi j \frac{m_2}{M}} \overline{g(-nN)}}\right)   e^{2\pi j \frac{1}{M}}\\
&=& SE_{(\frac{1}{M},\frac{1}{M})}h(k')e^{2\pi j \frac{1}{M}} .
\end{array}$$
Hence, $SE_{(\frac{1}{M},\frac{1}{M})}h(k')\neq E_{(\frac{1}{M},\frac{1}{M})}Sh(k')$. Thus, $SE_{(\frac{1}{M},\frac{1}{M})}\neq E_{(\frac{1}{M},\frac{1}{M})}S$.\\
\end{remark}
\begin{remark}
It follows from the remark above that if \( \mathcal{G}(h,L,M,N) \) is a Gabor system in $\ell^2(\mathbb{Z}\times \mathbb{Z},\mathbb{H})$ and \( \mathcal{G}(g,L,M,N) \) is a Gabor Bessel system in $\ell^2(\mathbb{Z}\times \mathbb{Z},\mathbb{H})$ with frame operator \( S \), then \( S\left(\mathcal{G}(h,L,M,N)\right) \) is not necessarily a Gabor system in $\ell^2(\mathbb{Z}\times \mathbb{Z},\mathbb{H})$, unlike the case of complex Gabor systems.\\
\end{remark}

In what follows, for the sake of computational simplicity, we will focus on Gabor systems with windows that take real values. We present, first, some useful lemmas for the following.

The following lemma is obtained through simple computation.
\begin{lemma}\label{lem1}
For $k\in \mathbb{Z}$.
$$\sum_{m\in \mathbb{N}_M}e^{2\pi i \frac{m}{M}k }=\sum_{m\in \mathbb{N}_M}e^{2\pi j \frac{m}{M}k }=M\chi_{M\mathbb{Z}}(k).$$
\end{lemma}	

This lemma will be very useful for the remainder.
\begin{lemma}\label{lem3}	
Let $\{g_l\}_{l\in \mathbb{N}_L}\subset \ell^2(\mathbb{Z}\times \mathbb{Z},\mathbb{R})$ and $h\in \ell_0(\mathbb{Z}\times\mathbb{Z},\mathbb{H})$. then,
$$\displaystyle{\sum_{l\in \mathbb{N}_L}\sum_{n\in \mathbb{Z}^2}\sum_{m\in \mathbb{N}_M^2}\left\vert \langle E_{\frac{m}{M}}T_{nN}g_l,h\rangle \right\vert^2}=F_1(h)+F_2(h),$$
where $F_1(h):=M^2\displaystyle{\sum_{k\in \mathbb{Z}^2}\vert h(k)\vert^2 \left(\sum_{l\in \mathbb{N}_L}\mathcal{M}_{g_l}(k)\mathcal{M}_{g_l}^t(k)\right)_{0_{\mathbb{Z}^2},0_{\mathbb{Z}^2}},}$ and 
$F_2(h):=M^2\displaystyle{\sum_{k\in \mathbb{Z}^2}\sum_{0_{\mathbb{Z}^2}\neq p\in \mathbb{Z}^2} \overline{h(k)} h(k+pM)\left(\sum_{l\in \mathbb{N}_L}\mathcal{M}_{g_l}(k)\mathcal{M}_{g_l}^t(k)\right)_{0_{\mathbb{Z}^2},p}}$.\\
\end{lemma}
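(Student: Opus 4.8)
The plan is to expand the left-hand side by a direct computation and then collapse the sum over $m$ using the orthogonality relation of Lemma \ref{lem1}. Since $h\in\ell_0(\mathbb{Z}\times\mathbb{Z},\mathbb{H})$ has finite support and each $g_l\in\ell^2(\mathbb{Z}\times\mathbb{Z},\mathbb{R})$, all the sums that appear are absolutely convergent (the $k$-sums are finite, the $m$-sums have $M^2$ terms, and the $n$-sums converge by Cauchy--Schwarz), so every interchange of summation below is justified by Fubini's theorem.

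First I would write, using that $g_l$ is real-valued and that conjugation reverses products,
$$\langle E_{\frac{m}{M}}T_{nN}g_l,h\rangle = \sum_{k\in\mathbb{Z}^2} e^{-2\pi j\frac{m_2}{M}k_2}\, g_l(k-nN)\, e^{-2\pi i\frac{m_1}{M}k_1}\, h(k),$$
exactly as in the proof of Proposition \ref{prop1}. Denoting this quaternion by $c_{l,n,m}$, I would form $|c_{l,n,m}|^2=\overline{c_{l,n,m}}\,c_{l,n,m}$ (choosing this order so that the factors $\overline{h(k')}$ and $h(k)$ sit at the two ends of the product), obtaining a double sum over $k,k'\in\mathbb{Z}^2$. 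Because each $g_l(k-nN)$ and $g_l(k'-nN)$ is a real scalar, it commutes with every quaternion and can be pulled out, leaving the core expression
$$\overline{h(k')}\, e^{2\pi i\frac{m_1}{M}k'_1}\, e^{2\pi j\frac{m_2}{M}(k'_2-k_2)}\, e^{-2\pi i\frac{m_1}{M}k_1}\, h(k).$$

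The decisive step is the summation over $m=(m_1,m_2)\in\mathbb{N}_M^2$, and this is where the non-commutativity of $i$ and $j$ must be handled with care. The crucial structural observation is that the single $j$-exponential carrying the $m_2$-dependence is sandwiched between the two $i$-exponentials carrying the $m_1$-dependence. Hence I would sum over $m_2$ first: by Lemma \ref{lem1} this yields $M\chi_{M\mathbb{Z}}(k'_2-k_2)$, and on the support of this indicator the $j$-exponential equals $1$, so it drops out as the real factor $M$ without disturbing the surrounding $i$-exponentials. Summing the remaining $e^{2\pi i\frac{m_1}{M}(k'_1-k_1)}$ over $m_1$ then gives $M\chi_{M\mathbb{Z}}(k'_1-k_1)$, again collapsing to $1$ on its support. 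The net effect is the factor $M^2$ together with the constraint $k'-k\in M\mathbb{Z}^2$; writing $k'=k+pM$ leaves the summand $M^2\,\overline{h(k+pM)}\,h(k)\,g_l(k-nN)\,g_l(k+pM-nN)$.

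Finally I would sum over $n\in\mathbb{Z}^2$ and $l\in\mathbb{N}_L$. Recognizing that $\sum_{n}g_l(k-nN)\,g_l(k+pM-nN)=\big(\mathcal{M}_{g_l}(k)\mathcal{M}_{g_l}^t(k)\big)_{0_{\mathbb{Z}^2},p}$ directly from the definition of $\mathcal{M}_{g_l}$, the whole expression becomes $M^2\sum_{k}\sum_{p}\overline{h(k+pM)}\,h(k)\big(\sum_l\mathcal{M}_{g_l}(k)\mathcal{M}_{g_l}^t(k)\big)_{0_{\mathbb{Z}^2},p}$. Splitting off the term $p=0_{\mathbb{Z}^2}$ gives $F_1(h)$ immediately, since $\overline{h(k)}\,h(k)=|h(k)|^2$. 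For the remaining terms $p\neq 0_{\mathbb{Z}^2}$, I would apply the substitution $(k,p)\mapsto(k+pM,-p)$ together with the identity $\big(\mathcal{M}_{g_l}(k)\mathcal{M}_{g_l}^t(k)\big)_{0_{\mathbb{Z}^2},p}=\big(\mathcal{M}_{g_l}(k+pM)\mathcal{M}_{g_l}^t(k+pM)\big)_{0_{\mathbb{Z}^2},-p}$ (both equal $\sum_n g_l(k-nN)g_l(k+pM-nN)$), which turns $\overline{h(k+pM)}\,h(k)$ into $\overline{h(k)}\,h(k+pM)$ and reproduces exactly $F_2(h)$. The only genuinely delicate point is the $m$-summation in the third paragraph; the rest is bookkeeping.
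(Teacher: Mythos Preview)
Your argument is correct and follows essentially the same route as the paper: expand $|c|^2=\overline{c}\,c$, use that the real scalars $g_l(\cdot)$ commute past the exponentials, and apply Lemma~\ref{lem1} to collapse the $m$-sum into the constraint $k'-k\in M\mathbb{Z}^2$. The only cosmetic differences are that the paper splits the off-diagonal part into the two cases $p_1=k_1,\,p_2\neq k_2$ and $p_1\neq k_1$ before applying Lemma~\ref{lem1} (your single pass, summing over $m_2$ first so the inner $j$-factor becomes real, is cleaner), and that the paper's choice of which index receives the shift yields $\overline{h(k)}h(k+pM)$ directly, sparing the relabeling $(k,p)\mapsto(k+pM,-p)$ you perform at the end.
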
	
\begin{proof}
In this proof, we will use the notation $a=(a_1,a_2)$ for every $a\in \mathbb{Z}^2$. Let $h\in \ell_0(\mathbb{Z}\times \mathbb{Z},\mathbb{H})$, we have:
$$\begin{array}{rcl}
&&\displaystyle{\sum_{l\in \mathbb{N}_L}\sum_{n\in \mathbb{Z}^2}\sum_{m\in \mathbb{N}_M^2}\left\vert \langle E_{\frac{m}{M}}T_{nN}g_l,h\rangle \right\vert^2}\\
&=&\displaystyle{\sum_{l\in \mathbb{N}_L}\sum_{n\in \mathbb{Z}^2}\sum_{m\in \mathbb{N}_M^2}\left\vert \sum_{k\in \mathbb{Z}^2}\overline{E_{\frac{m}{M}}T_{nN}g_l(k)} h(k)\right\vert^2}\\
&=&\displaystyle{\sum_{l\in \mathbb{N}_L}\sum_{n\in \mathbb{Z}^2}\sum_{m\in \mathbb{N}_M^2}\left(\sum_{k\in \mathbb{Z}^2}\overline{h(k)}E_{\frac{m}{M}}T_{nN}g_l(k)\right)\left(\sum_{k\in \mathbb{Z}^2}\overline{E_{\frac{m}{M}}T_{nN}g_l(k)} h(k)\right)}\\
\end{array}$$
$$\begin{array}{rcl}
&=&\displaystyle{\sum_{l\in \mathbb{N}_L}\sum_{n\in \mathbb{Z}^2}\sum_{m\in \mathbb{N}_M^2}\sum_{k\in \mathbb{Z}^2}\left\vert \overline{E_{\frac{m}{M}}T_{nN}g_l(k)}h(k)\right\vert^2}\\
&+&\displaystyle{\sum_{l\in \mathbb{N}_L}\sum_{n\in \mathbb{Z}^2}\sum_{m\in \mathbb{N}_M^2}\sum_{k\in \mathbb{Z}^2}\sum_{p_1=k_1}\sum_{p_2\neq k_2}\overline{h(k)}E_{\frac{m}{M}}T_{nN}g_l(k)\overline{E_{\frac{m}{M}}T_{nN}g_l(p_1,p_2)}h(p_1,p_2)}\\
&+&\displaystyle{\sum_{l\in \mathbb{N}_L}\sum_{n\in \mathbb{Z}^2}\sum_{m\in \mathbb{N}_M^2}\sum_{k\in \mathbb{Z}^2}\sum_{p_1\neq k_1}\sum_{p_2\in \mathbb{Z}}\overline{h(k)}E_{\frac{m}{M}}T_{nN}g_l(k)\overline{E_{\frac{m}{M}}T_{nN}g_l(p_1,p_2)}h(p_1,p_2)}.
\end{array}$$
We have, $$\begin{array}{rcl}
&&\displaystyle{\sum_{l\in \mathbb{N}_L}\sum_{n\in \mathbb{Z}^2}\sum_{m\in \mathbb{N}_M^2}\sum_{k\in \mathbb{Z}^2}\left\vert \overline{E_{\frac{m}{M}}T_{nN}g_l(k)}h(k)\right\vert^2}\\
&=&\displaystyle{\sum_{l\in \mathbb{N}_L}\sum_{n\in \mathbb{Z}^2}\sum_{m\in \mathbb{N}_M^2}\sum_{k\in \mathbb{Z}^2}\left\vert e^{-2\pi j\frac{m_2}{M}k_2}\overline{g_l(k-nN)}e^{-2\pi i \frac{m_1}{M}k_1} h(k)\right\vert^2} \\
&=& \displaystyle{\sum_{l\in \mathbb{N}_L}\sum_{n\in \mathbb{Z}^2}\sum_{m\in \mathbb{N}_M^2}\sum_{k\in \mathbb{Z}^2}\left\vert h(k)\right\vert^2\left\vert g_l(k-nN)\right\vert^2}\\
&=&M^2\displaystyle{\sum_{k\in \mathbb{Z}^2} \left\vert h(k)\right\vert^2 \sum_{l\in \mathbb{N}_L}\sum_{n\in \mathbb{Z}^2}\left\vert g(k-nN)\right\vert^2}\\
&=& M^2\displaystyle{\sum_{k\in \mathbb{Z}^2} \left\vert h(k)\right\vert^2 \left(\sum_{l\in \mathbb{N}_L}\mathcal{M}_{g_l}(k)\mathcal{M}_{g_l}^t(k)\right)_{0_{\mathbb{Z}^2},0_{\mathbb{Z}^2}}}\\
&=&F_1(h).
\end{array}$$
The above change of summation is justified by the fact that $h\in \ell_0(\mathbb{Z}\times \mathbb{Z},\mathbb{H})$.\\
And we have, 
$$\begin{array}{rcl}
&&\displaystyle{\sum_{l\in \mathbb{N}_L}\sum_{n\in \mathbb{Z}^2}\sum_{m\in \mathbb{N}_M^2}\sum_{k\in \mathbb{Z}^2}\sum_{p_1=k_1}\sum_{p_2\neq k_2}\overline{h(k)}E_{\frac{m}{M}}T_{nN}g_l(k)\overline{E_{\frac{m}{M}}T_{nN}g_l(p_1,p_2)}h(p_1,p_2)}\\
&=&\displaystyle{\sum_{l\in \mathbb{N}_L}\sum_{n\in \mathbb{Z}^2}\sum_{m\in \mathbb{N}_M^2}\sum_{k\in \mathbb{Z}^2}\sum_{p_1=k_1}\sum_{p_2\neq k_2}\overline{h(k)}e^{2\pi i \frac{m_1}{M}k_1}g_l(k-nN)e^{2\pi j \frac{m_2}{M}k_2} e^{-2\pi j\frac{m_2}{M}p_2}}\\
&&g_l((k_1,p_2)-nN)e^{-2\pi i \frac{m_1}{M}k_1}h(k_1,p_2)\\
&=&M^2\displaystyle{\sum_{k\in \mathbb{Z}^2}\sum_{q\neq 0} \overline{h(k)}h(k_1,k_2+qM)\sum_{l\in \mathbb{N}_L}\sum_{n\in \mathbb{Z}^2}g_l(k-nN)g_l(k+(0,q)M-nN)}\\
&=&M^2\displaystyle{\sum_{k\in \mathbb{Z}^2}\sum_{q\neq 0} \overline{h(k)}h(k_1,k_2+qM)\left(\sum_{l\in \mathbb{N}_L}\mathcal{M}_{g_l}(k)\mathcal{M}_{g_l}^t(k)\right)_{0_{\mathbb{Z}^2},(0,q)}}.
\end{array}$$
The above change of summation is justified by the fact that \( h \in \ell_0(\mathbb{Z} \times \mathbb{Z}, \mathbb{H}) \), the commutativity of \( g_l \) with the other terms is justified by the fact that it takes real values, and the fact that \( p_2 = k_2 + qM \) is justified by Lemma \ref{lem1}.\\
We also have: 
$$\begin{array}{rcl}
&&\displaystyle{\sum_{l\in \mathbb{N}_L}\sum_{n\in \mathbb{Z}^2}\sum_{m\in \mathbb{N}_M^2}\sum_{k\in \mathbb{Z}^2}\sum_{p_1\neq k_1}\sum_{p_2\in \mathbb{Z}}\overline{h(k)}E_{\frac{m}{M}}T_{nN}g_l(k)\overline{E_{\frac{m}{M}}T_{nN}g_l(p_1,p_2)}h(p_1,p_2)}\\
&=&\displaystyle{\sum_{l\in \mathbb{N}_L}\sum_{n\in \mathbb{Z}^2}\sum_{m\in \mathbb{N}_M^2}\sum_{k\in \mathbb{Z}^2}\sum_{p_1\neq k_1}\sum_{p_2\in \mathbb{Z}}\overline{h(k)} e^{2\pi i \frac{m_1}{M}k_1}g_l(k-nN)e^{2\pi j\frac{m_2}{M}k_2}g_l(k-nN)e^{2\pi j\frac{m_2}{M}k_2}}\\
&&e^{-2\pi j \frac{m_2}{M}p_2}
g_l(p-nN)e^{-2\pi i \frac{m_1}{M}p_1} h(p_1,p_2)\\
&=&M^2\displaystyle{\sum_{k\in \mathbb{Z}^2}\sum_{q_1\neq 0}\sum_{q_2\in \mathbb{Z}}\overline{h(k)}h(k+(q_1,q_2)M)\sum_{l\in \mathbb{N}_L}\sum_{n\in \mathbb{Z}^2}g_l(k-nN)g_l(k+(q_1,q_2)M-nN)}\\
&=&M^2\displaystyle{\sum_{k\in \mathbb{Z}^2}\sum_{q_1\neq 0}\sum_{q_2\in \mathbb{Z}}\overline{h(k)}h(k+(q_1,q_2)M)\left( \sum_{l\in \mathbb{N}_L}\mathcal{M}_{g_l}(k)\mathcal{M}_{g_l}^t(k)\right)_{0_{\mathbb{Z}^2},(q_1,q_2)}}.
\end{array}$$
The above change of summation is justified by the fact that \( h \in \ell_0(\mathbb{Z} \times \mathbb{Z}, \mathbb{H}) \), the commutativity of \( g_l \) with the other terms is justified by the fact that it takes real values, and the fact that \(p_1=k_1+q_1M\) and  \( p_2 = k_2 + 2M \) is justified by Lemma \ref{lem1}.\\
Hence, $$\begin{array}{rcl}
&&\displaystyle{\sum_{l\in \mathbb{N}_L}\sum_{n\in \mathbb{Z}^2}\sum_{m\in \mathbb{N}_M^2}\sum_{k\in \mathbb{Z}^2}\sum_{p_1=k_1}\sum_{p_2\neq k_2}\overline{h(k)}E_{\frac{m}{M}}T_{nN}g_l(k)\overline{E_{\frac{m}{M}}T_{nN}g_l(p_1,p_2)}h(p_1,p_2)}\\
&+&\displaystyle{\sum_{l\in \mathbb{N}_L}\sum_{n\in \mathbb{Z}^2}\sum_{m\in \mathbb{N}_M^2}\sum_{k\in \mathbb{Z}^2}\sum_{p_1\neq k_1}\sum_{p_2\in \mathbb{Z}}\overline{h(k)}E_{\frac{m}{M}}T_{nN}g_l(k)\overline{E_{\frac{m}{M}}T_{nN}g_l(p_1,p_2)}h(p_1,p_2)}\\
&=&M^2\displaystyle{\sum_{k\in \mathbb{Z}^2}\sum_{0_{\mathbb{Z}^2}\neq p\in \mathbb{Z}^2} \overline{h(k)} h(k+pM)\left(\sum_{l\in \mathbb{N}_L}\mathcal{M}_{g_l}(k)\mathcal{M}_{g_l}^t(k)\right)_{0_{\mathbb{Z}^2},p}}\\
&=&F_2(h).
\end{array}$$
\end{proof}

\begin{lemma}\label{lem5}
Let $g,h\in \ell^2(\mathbb{Z}\times \mathbb{Z},\mathbb{H})$. Then, $\mathcal{M}_{g}(.)\mathcal{M}_h^*(.)$ is $N\mathbb{Z}^2-$periodic, i.e., for all $k,q\in \mathbb{Z}^2$, $\mathcal{M}_g(k+qN)\mathcal{M}_h^*(k+qN)=\mathcal{M}_g(k)\mathcal{M}_h^*(k).$\\
\end{lemma}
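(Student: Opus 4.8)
The plan is to prove the periodicity entrywise, since two bi-infinite matrices agree if and only if all of their entries agree. First I would write out the $(p,p')$-entry of the product $\mathcal{M}_g(k)\mathcal{M}_h^*(k)$ explicitly. By definition $(\mathcal{M}_g(k))_{p,n}=g(k+pM-nN)$, and since $\mathcal{M}_h^*(k)$ is the conjugate transpose we have $(\mathcal{M}_h^*(k))_{n,p'}=\overline{(\mathcal{M}_h(k))_{p',n}}=\overline{h(k+p'M-nN)}$. Carrying out the matrix multiplication (keeping the quaternionic order of the factors) then gives
$$\left(\mathcal{M}_g(k)\mathcal{M}_h^*(k)\right)_{p,p'}=\sum_{n\in\mathbb{Z}^2}g(k+pM-nN)\,\overline{h(k+p'M-nN)}.$$

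Before manipulating this series I would verify that it is absolutely convergent, so that the reindexing below is legitimate. Since $g,h\in\ell^2(\mathbb{Z}\times\mathbb{Z},\mathbb{H})$, the Cauchy--Schwarz inequality yields
$$\sum_{n\in\mathbb{Z}^2}\left\vert g(k+pM-nN)\right\vert\,\left\vert h(k+p'M-nN)\right\vert\leq \|g\|\,\|h\|<\infty,$$
so the sum defining each entry is well defined and reindexable.

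The heart of the argument is then a single index shift. Replacing $k$ by $k+qN$ and using $k+qN+pM-nN=k+pM-(n-q)N$, I obtain
$$\left(\mathcal{M}_g(k+qN)\mathcal{M}_h^*(k+qN)\right)_{p,p'}=\sum_{n\in\mathbb{Z}^2}g(k+pM-(n-q)N)\,\overline{h(k+p'M-(n-q)N)}.$$
Substituting $n'=n-q$, which is a bijection of $\mathbb{Z}^2$ onto itself, recovers exactly the $(p,p')$-entry of $\mathcal{M}_g(k)\mathcal{M}_h^*(k)$ computed above. As $p,p',k,q\in\mathbb{Z}^2$ were arbitrary, every entry is $N\mathbb{Z}^2$-periodic, hence so is the matrix-valued function $\mathcal{M}_g(\cdot)\mathcal{M}_h^*(\cdot)$.

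I do not expect a genuine obstacle here; the only points requiring care are the bookkeeping of the conjugate-transpose convention in the noncommutative quaternionic setting (the order $g(\cdots)\,\overline{h(\cdots)}$ must be preserved and not swapped) and the justification of the reindexing via absolute convergence. Both are routine consequences of the definition of $\mathcal{M}_{\cdot}$ and the $\ell^2$ hypothesis.
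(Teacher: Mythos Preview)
Your proof is correct and follows essentially the same approach as the paper: both compute the $(p,p')$-entry of the product explicitly and then apply the bijective change of summation index $n\mapsto n-q$ to obtain the periodicity. Your additional remarks on absolute convergence via Cauchy--Schwarz and on preserving the quaternionic factor order are sound refinements but do not change the underlying argument.
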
	
\begin{proof}
Let $k,q\in \mathbb{Z}^2$ and $m,p\in \mathbb{Z}^2$, we have:
$$\begin{array}{rcl}
\left(\mathcal{M}_g(k+qN)\mathcal{M}_h^*(k+qN)\right)_{m,p}&=&\displaystyle{\sum_{n\in \mathbb{Z}^2}g(k+qN+mM-nN)\overline{h(k+qN+pM-nN)}}\\
&=&\displaystyle{\sum_{n\in \mathbb{Z}^2}g(k+mM-(n-q)N)\overline{h(k+pM-(n-q)N)}}\\
&=&\displaystyle{\sum_{n\in \mathbb{Z}^2}g(k+mM-nN)\overline{h(k+pM-nN)}}\\
&=&\left(\mathcal{M}_g(k)\mathcal{M}_h^*(k)\right)_{m,p}.
\end{array}$$
\end{proof}
The following theorem characterizes discrete quaternionic Gabor frames with real-values windows.
\begin{theorem}
Let $g:=\{g_l\}_{l\in \mathbb{N}_L}\subset \ell^2(\mathbb{Z}\times \mathbb{Z},\mathbb{R})$. Then, the following statements are equivalent:
\begin{enumerate}
\item $\mathcal{G}(g,L,M,N)$ is a frame for $\ell^2(\mathbb{Z}\times \mathbb{Z},\mathbb{H})$ with frame bounds $A\leq B$.
\item For all $k\in \mathbb{Z}^2$, $$\frac{A}{M^2}I\preceq \sum_{l\in \mathbb{N}_L}\mathcal{M}_{g_l}(k)\mathcal{M}_{g_l}^t(k)\preceq \frac{B}{M^2}I.$$
\item For all $k\in \mathbb{N}_N^2$, $$\frac{A}{M^2}I\preceq \sum_{l\in \mathbb{N}_L}\mathcal{M}_{g_l}(k)\mathcal{M}_{g_l}^t(k)\preceq \frac{B}{M^2}I.$$
\end{enumerate}
Where $I$ denotes the identity matrix, i.e, $I=\left(\delta_{p,n}\right)_{p,n}$.
\end{theorem}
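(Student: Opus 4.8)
The plan is to recast the frame inequality as a direct sum of operator inequalities indexed by the residue classes of $\mathbb{Z}^2$ modulo $M$, and then to promote these to the index sets $\mathbb{Z}^2$ and $\mathbb{N}_N^2$ using periodicity. Throughout I abbreviate $G(k):=\sum_{l\in\mathbb{N}_L}\mathcal{M}_{g_l}(k)\mathcal{M}_{g_l}^t(k)$. Since the $g_l$ are real-valued we have $\mathcal{M}_{g_l}^t=\mathcal{M}_{g_l}^*$, so each $G(k)$ is a real, symmetric, positive semidefinite bi-infinite matrix, and Lemma \ref{lem5} applies with $g=h=g_l$ to give that $k\mapsto G(k)$ is $N\mathbb{Z}^2$-periodic. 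The single algebraic fact underpinning the whole argument is the shift-covariance identity $G(k+Mc)_{\alpha,\beta}=G(k)_{\alpha+c,\beta+c}$ for all $c,\alpha,\beta\in\mathbb{Z}^2$, which is immediate from $(\mathcal{M}_{g_l}(k))_{p,n}=g_l(k+pM-nN)$.

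First I would start from Lemma \ref{lem3}, combining $F_1(h)$ and $F_2(h)$ into
\[
\sum_{l\in\mathbb{N}_L}\sum_{n\in\mathbb{Z}^2}\sum_{m\in\mathbb{N}_M^2}\left\vert\langle E_{\frac{m}{M}}T_{nN}g_l,h\rangle\right\vert^2=M^2\sum_{k\in\mathbb{Z}^2}\sum_{p\in\mathbb{Z}^2}\overline{h(k)}\,h(k+pM)\,G(k)_{0_{\mathbb{Z}^2},p}.
\]
Splitting the outer sum into residue classes $k=r+aM$ with $r\in\mathbb{N}_M^2$ and $a\in\mathbb{Z}^2$, applying the shift-covariance identity in the form $G(r+aM)_{0_{\mathbb{Z}^2},p}=G(r)_{a,a+p}$, and substituting $b=a+p$, this becomes
\[
\sum_{l\in\mathbb{N}_L}\sum_{n\in\mathbb{Z}^2}\sum_{m\in\mathbb{N}_M^2}\left\vert\langle E_{\frac{m}{M}}T_{nN}g_l,h\rangle\right\vert^2=M^2\sum_{r\in\mathbb{N}_M^2}\langle\xi^r,G(r)\xi^r\rangle,\qquad \xi^r:=\bigl(h(r+aM)\bigr)_{a\in\mathbb{Z}^2},
\]
where $\langle\xi^r,G(r)\xi^r\rangle=\sum_{a,b}\overline{\xi^r_a}\,G(r)_{a,b}\,\xi^r_b$ is real-valued because $G(r)$ is real and symmetric. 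Simultaneously $\sum_{k}\vert h(k)\vert^2=\sum_{r\in\mathbb{N}_M^2}\|\xi^r\|^2$.

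Next I would observe that, as $h$ ranges over $\ell_0(\mathbb{Z}\times\mathbb{Z},\mathbb{H})$, the family $(\xi^r)_{r\in\mathbb{N}_M^2}$ ranges over all finitely supported tuples independently; taking $h$ supported in a single class $r_0$ isolates each $r_0$. Hence the frame inequality, which holds on the dense subspace $\ell_0$ and extends to all of $\ell^2$ by the Bessel (upper) bound, is equivalent to $A\|\xi\|^2\le M^2\langle\xi,G(r)\xi\rangle\le B\|\xi\|^2$ for every finitely supported $\xi$ and every $r\in\mathbb{N}_M^2$, that is, to $\frac{A}{M^2}I\preceq G(r)\preceq\frac{B}{M^2}I$ for all $r\in\mathbb{N}_M^2$. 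This proves the equivalence of (1) with the operator bounds taken over $\mathbb{N}_M^2$. To finish I would enlarge the index set using two periodicities. By Lemma \ref{lem5}, $G(k+qN)=G(k)$, so the validity of $\frac{A}{M^2}I\preceq G(k)\preceq\frac{B}{M^2}I$ depends only on $k$ modulo $N$, giving (2)$\Leftrightarrow$(3). For the remaining step, the shift-covariance identity yields $G(k+Mc)=V_c^{*}G(k)V_c$ for the unitary coordinate shift $(V_c\eta)_a=\eta_{a-c}$; since $A/M^2\le M^2\langle\eta,G\eta\rangle/\|\eta\|^2\le B/M^2$ is preserved under $\eta\mapsto V_c\eta$, the validity of the bounds also depends only on $k$ modulo $M$. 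As $\mathbb{N}_M^2$ is a complete residue system modulo $M$, the bounds over $\mathbb{N}_M^2$ are equivalent to the bounds over all of $\mathbb{Z}^2$, which is (2), completing the chain (1)$\Leftrightarrow$(2)$\Leftrightarrow$(3).

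I expect the main obstacle to be the bookkeeping of the second step: correctly reorganizing the triple sum into the block-diagonal quadratic form $M^2\sum_{r}\langle\xi^r,G(r)\xi^r\rangle$ via the shift-covariance identity, and verifying that each block form is real and nonnegative even though $\xi^r$ is quaternion-valued (which rests on $G(r)$ having real symmetric entries, so that the scalars commute past the quaternions and $\langle\xi,G(r)\xi\rangle=\sum_l\|\mathcal{M}_{g_l}^t(r)\xi\|^2$). The only genuine conceptual subtlety is that $\mathbb{N}_M^2$ need not be a residue system modulo $N$, so passing among the three index sets truly requires both the exact $N$-periodicity of $G$ and the unitary-equivalence $M$-periodicity of the operator bounds, rather than a single periodicity statement.
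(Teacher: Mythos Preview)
Your proof is correct and lands on the same core identity as the paper --- the frame sum rewritten as $M^2\sum_{r\in\mathbb{N}_M^2}\langle\xi^r,G(r)\xi^r\rangle$ with $\xi^r_a=h(r+aM)$, which decouples over residue classes --- but you reach it by a different route. The paper derives this block form in one stroke from the Parseval identity for $\ell_M(\mathbb{Z}\times\mathbb{Z},\mathbb{H})$ (Lemma~\ref{lem2}), obtaining directly that the frame sum equals $M^2\sum_{l,n}\sum_{k\in\mathbb{N}_M^2}\big|\sum_p g_l(k+pM-nN)f(k+pM)\big|^2=M^2\sum_{k\in\mathbb{N}_M^2}\sum_l\|\mathcal{M}_{g_l}^t(k)\xi^k\|^2$; you instead start from the expanded expression in Lemma~\ref{lem3} and then reassemble it into block form via the shift-covariance identity $G(k+Mc)_{\alpha,\beta}=G(k)_{\alpha+c,\beta+c}$. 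The paper's route is shorter to the block form; your route buys a cleaner and more explicit treatment of the passage among the three index sets, because the shift-covariance identity immediately yields the unitary equivalence $G(k+Mc)=V_c^{*}G(k)V_c$, making transparent why the operator bounds (not $G$ itself) are $M$-periodic --- a point the paper handles somewhat informally by appealing to ``$M$-periodicity of the terms in the inequality''.
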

\begin{proof}
By a simple computation and using Lemma \ref{lem2}, we obtain:
\begin{equation}
\begin{array}{rcl}
&&\displaystyle{\sum_{l\in \mathbb{N}_L}\sum_{n\in \mathbb{Z}^2}\sum_{m\in \mathbb{N}_M^2}\vert \langle E_{\frac{m}{M}}T_{nN}g_l,f\rangle\vert^2}\\
&=&\displaystyle{\sum_{l\in \mathbb{N}_L}\sum_{n\in \mathbb{Z}^2}\sum_{m\in \mathbb{N}_M^2}\left\vert \sum_{k\in \mathbb{Z}^2}e^{-2\pi j \frac{m_2}{M}k_2}\overline{g(k-nN)}e^{-2\pi i \frac{m_1}{M}k_1}f(k)\right\vert^2}\\
&=&\displaystyle{\sum_{l\in \mathbb{N}_L}\sum_{n\in \mathbb{Z}^2}\sum_{m\in \mathbb{N}_M^2}\left\vert \sum_{k\in \mathbb{N}_M^2} e^{-2\pi j \frac{m_2}{M}k_2}g(k-nN)e^{-2\pi i \frac{m_1}{M}k_1}\left(\sum_{p\in \mathbb{Z}^2}g(k+pM-nN)f(k+pM)\right)\right\vert^2}\\
&=&M^2\displaystyle{\sum_{l\in \mathbb{N}_L}\sum_{n\in \mathbb{Z}^2}\sum_{k\in \mathbb{N}_M^2}\left\vert \sum_{p\in \mathbb{Z}^2}g(k+pM-nN)f(k+pM)\right\vert^2}\\
\end{array}
\end{equation}
Assume that $\mathcal{G}(g,L,M,N)$ is a frame with frame bounds $A\leq B$. Then, for all $f\in \ell^2(\mathbb{Z}\times \mathbb{Z},\mathbb{H})$, we have:
$$\begin{array}{rcl}
A\displaystyle{\sum_{k\in \mathbb{Z}^2}\vert f(k)\vert^2}&\leq& M^2\displaystyle{\sum_{l\in \mathbb{N}_L}\sum_{n\in \mathbb{Z}^2}\sum_{k\in \mathbb{N}_M^2}\left\vert \sum_{p\in \mathbb{Z}^2}g(k+pM-nN)f(k+pM)\right\vert^2}\\
&\leq& B\displaystyle{\sum_{k\in \mathbb{Z}^2}\vert f(k)\vert^2}.
\end{array}$$
Fix $k'\in \mathbb{N}_M^2$ and define a $M$-periodic sequence $\phi$ defined on $\mathbb{N}_M^2$ by $\phi=\chi_{\{k'\}}$. Then replacing $f$ by $f\phi$ in the above inequality leads to:
$$\begin{array}{rcl}
A\displaystyle{\sum_{p\in \mathbb{Z}^2}\vert f(k'+pM)\vert^2}&\leq& M^2\displaystyle{\sum_{l\in \mathbb{N}_L}\sum_{n\in \mathbb{Z}^2}\left\vert \sum_{p\in \mathbb{Z}^2}g(k'+pM-nN)f(k'+pM)\right\vert^2}\\
&\leq& B\displaystyle{\sum_{p\in \mathbb{Z}^2}\vert f(k'+pM)\vert^2}.
\end{array}$$
Then, by arbitrariness of $k'$ and by $M$-periodicity of the terms in the inequality, we have: 
$$\begin{array}{rcl}
A\displaystyle{\sum_{p\in \mathbb{Z}^2}\vert f(.+pM)\vert^2}&\leq& M^2\displaystyle{\sum_{l\in \mathbb{N}_L}\sum_{n\in \mathbb{Z}^2}\left\vert \sum_{p\in \mathbb{Z}^2}g(.+pM-nN)f(.+pM)\right\vert^2}\\
&\leq& B\displaystyle{\sum_{p\in \mathbb{Z}^2}\vert f(.+pM)\vert^2}.
\end{array}$$
Let, now, $x\in \ell^2(\mathbb{Z}\times \mathbb{Z},\mathbb{H})$ be arbitrary, fix $k'\in \mathbb{N}_N^2$, and define $f\ell^2(\mathbb{Z}\times \mathbb{Z},\mathbb{H})$ by $f(k+pM)=x(p)\chi_{\{k'\}}(k)$ for all $k\in \mathbb{N}_M^2$ and $p\in \mathbb{Z}^2$. Then, 
$$\begin{array}{rcl}
A\displaystyle{\sum_{p\in \mathbb{Z}^2}\vert x(p)\vert^2}&\leq& M^2\displaystyle{\sum_{l\in \mathbb{N}_L}\sum_{n\in \mathbb{Z}^2}\left\vert \sum_{p\in \mathbb{Z}^2}g(k'+pM-nN)x(p)\right\vert^2} \leq B\displaystyle{\sum_{p\in \mathbb{Z}^2}\vert x(p)\vert^2}.
\end{array}$$
Hence, $$\begin{array}{rcl}
A\langle x,x\rangle &\leq& M^2\displaystyle{\sum_{l\in \mathbb{N}_L}\sum_{n\in \mathbb{Z}^2}\left\vert \left(\mathcal{M}_{g_l}^t(k')x\right)_{n}\right\vert^2} \leq B\langle x,x\rangle.
\end{array}$$
Thus, $$\begin{array}{rcl}
A\langle x,x\rangle &\leq& M^2\displaystyle{\sum_{l\in \mathbb{N}_L}\left\| \mathcal{M}_{g_l}^t(k')x\right\|^2} \leq B\langle x,x\rangle.
\end{array}$$
i.e., $$\begin{array}{rcl}
A\langle x,x\rangle &\leq& M^2\displaystyle{\left\langle \sum_{l\in \mathbb{N}_L}\mathcal{M}_{g_l}(k')\mathcal{M}_{g_l}^t(k')x,x\right\rangle} \leq B\langle x,x\rangle.
\end{array}$$
Hence, by arbitrariness of $k'$ in $\mathbb{N}_M^2$, we have for all $k\in \mathbb{N}_N^2$, $$\frac{A}{M^2}I\preceq \sum_{l\in \mathbb{N}_L}\mathcal{M}_{g_l}(k)\mathcal{M}_{g_l}^t(k)\preceq \frac{B}{M^2}I.$$
Conversely, assume $3.$, and let $f\in \ell^2(\mathbb{Z}\times \mathbb{Z},\mathbb{H})$ and $k\in \mathbb{N}_M^2$. Define $x^{(k)}\in \ell^2(\mathbb{Z}\times \mathbb{Z},\mathbb{H})$ by $x^{(k)}(p)=f(k+pM)$ for all $p\in \mathbb{Z}^2$. Then, by $3$., we have (for all $k\in \mathbb{N}_M^2$): 
$$\frac{A}{M^2}\|x^{(k)}\|^2\leq \sum_{l\in \mathbb{N}_L}\left\|\mathcal{M}_{g_l}^t(k)x^{(k)}\right\|^2\leq \frac{B}{M^2}\|x^{(k)}\|^2.$$
i.e., $$\frac{A}{M^2}\sum_{p\in \mathbb{Z}^2}\vert f(k+pM)\vert^2\leq \sum_{l\in \mathbb{N}_L}\sum_{n\in \mathbb{Z}^2}\left\vert\sum_{p\in \mathbb{Z}^2}g(k+pM-nN)f(k+pM)\right\vert^2\leq \frac{B}{M^2}\sum_{p\in \mathbb{Z}^2}\vert f(k+pM)\vert^2.$$
Then,
$$\begin{array}{rcl}
\displaystyle{\frac{A}{M^2}\sum_{k\in \mathbb{N}_M^2}\sum_{p\in \mathbb{Z}^2}\vert f(k+pM)\vert^2}&\leq& \displaystyle{\sum_{l\in \mathbb{N}_L}\sum_{n\in \mathbb{Z}^2}\sum_{k\in \mathbb{N}_M^2}\left\vert\sum_{p\in \mathbb{Z}^2}g(k+pM-nN)f(k+pM)\right\vert^2}\\
&\leq &\displaystyle{\frac{B}{M^2}\sum_{k\in \mathbb{N}_M^2}\sum_{p\in \mathbb{Z}^2}\vert f(k+pM)\vert^2}.
\end{array}$$
Then, by $(2)$ and by arbitrariness of $f$, we deduce that for all $f\in \ell^2(\mathbb{Z}\times \mathbb{Z},\mathbb{H})$:
$$A\|f\|^2\leq \sum_{l\in \mathbb{N}_L}\sum_{n\in \mathbb{Z}^2}\sum_{m\in \mathbb{N}_M^2}\left\vert \langle E_{\frac{m}{M}}T_{nN}g_l,f\rangle \right\vert^2\leq B\| f\|^2.$$
Hence, $\mathcal{G}(g,L,M,N)$ is a frame with frame bounds $A\leq B$. It is clear that $2.$ and $3.$ are equivalent by Lemma \ref{lem5}.
\end{proof}	

In the following theorem, we provide sufficient conditions for a Gabor system to be Bessel or, moreover, a Gabor frame.
\begin{theorem}\label{thmd}
Let $g:=\{g_l\}_{l\in \mathbb{N}_L}\subset \ell^2(\mathbb{Z}\times \mathbb{Z},\mathbb{R})$. If
$$B:=M^2 \max_{k\in \mathbb{N}_N^2}\sum_{p\in \mathbb{Z}^2}\left\vert \left(\sum_{l\in \mathbb{N}_L}\mathcal{M}_{g_l}(k)\mathcal{M}_{g_l}^t(k)\right)_{0_{\mathbb{Z}^2},p}\right\vert< \infty,$$
then $\mathcal{G}(g,L,M,N)$ is a Bessel Gabor system for $\ell^2(\mathbb{Z}\times \mathbb{Z},\mathbb{H})$ with Bessel bound $B$. If also 
$$A:=M^2 \min_{k\in \mathbb{N}_N^2}\sum_{l\in \mathbb{N}_L}\left[ (\mathcal{M}_{g_l}(k)\mathcal{M}_{g_l}^t(k))_{0_{\mathbb{Z}^2},0_{\mathbb{Z}^2}}-\sum_{0_{\mathbb{Z}^2}\neq p\in \mathbb{Z}^2}\left\vert (\mathcal{M}_{g_l}(k)\mathcal{M}_{g_l}^t(k))_{0_{\mathbb{Z}^2},p}\right\vert \right]> 0,$$
then $\mathcal{G}(g,L,M,N)$ is a Gabor frame for $\ell^2(\mathbb{Z}\times \mathbb{Z},\mathbb{H})$ with frame bounds $A\leq B$.
\end{theorem}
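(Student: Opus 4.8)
The plan is to start from the pointwise identity of Lemma \ref{lem3}, which for real-valued windows and $h\in\ell_0(\mathbb{Z}\times\mathbb{Z},\mathbb{H})$ writes the frame sum as $F_1(h)+F_2(h)$. I introduce the shorthand $G_p(k):=\left(\sum_{l\in\mathbb{N}_L}\mathcal{M}_{g_l}(k)\mathcal{M}_{g_l}^t(k)\right)_{0_{\mathbb{Z}^2},p}=\sum_{l\in\mathbb{N}_L}\sum_{n\in\mathbb{Z}^2}g_l(k-nN)g_l(k+pM-nN)$, so that $F_1(h)=M^2\sum_k|h(k)|^2 G_0(k)$, $F_2(h)=M^2\sum_k\sum_{p\neq 0}\overline{h(k)}h(k+pM)G_p(k)$, and their sum is $M^2\sum_k\sum_{p\in\mathbb{Z}^2}\overline{h(k)}h(k+pM)G_p(k)$. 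Two structural facts drive the estimates: by Lemma \ref{lem5} each $G_p$ is $N\mathbb{Z}^2$-periodic in $k$, so sums and extrema over $k\in\mathbb{Z}^2$ reduce to $k\in\mathbb{N}_N^2$; and a direct index shift in the defining sum gives the reflection identity $G_p(k)=G_{-p}(k+pM)$.

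For the Bessel bound I take absolute values in the combined expression and apply $|h(k)||h(k+pM)|\le\tfrac12(|h(k)|^2+|h(k+pM)|^2)$, splitting the double sum into two halves. In the second half I substitute $k'=k+pM$ and relabel $p\mapsto -p$; the reflection identity $G_p(k)=G_{-p}(k+pM)$ converts it into exactly the first half, so that $F_1(h)+F_2(h)\le M^2\sum_k|h(k)|^2\sum_{p\in\mathbb{Z}^2}|G_p(k)|$. By periodicity the inner row-sum is bounded by $\max_{k\in\mathbb{N}_N^2}M^2\sum_p|G_p(k)|=B$, giving $F_1(h)+F_2(h)\le B\|h\|^2$ for $h\in\ell_0$. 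Since $\ell_0(\mathbb{Z}\times\mathbb{Z},\mathbb{H})$ is dense in $\ell^2(\mathbb{Z}\times\mathbb{Z},\mathbb{H})$, Fatou's lemma applied to a sequence $h_n\to h$ extends the inequality to all $h$, so $\mathcal{G}(g,L,M,N)$ is Bessel with bound $B$.

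For the lower bound I keep the $p=0$ term separate: $F_1(h)=M^2\sum_k|h(k)|^2G_0(k)\ge0$, while the same AM-GM-plus-reflection argument applied only to $p\neq0$ yields $|F_2(h)|\le M^2\sum_k|h(k)|^2\sum_{p\neq0}|G_p(k)|$. Hence $F_1(h)+F_2(h)\ge M^2\sum_k|h(k)|^2\bigl(G_0(k)-\sum_{p\neq0}|G_p(k)|\bigr)$. Since $G_0(k)=\sum_l(\mathcal{M}_{g_l}(k)\mathcal{M}_{g_l}^t(k))_{0_{\mathbb{Z}^2},0_{\mathbb{Z}^2}}$ and, by the triangle inequality, $|G_p(k)|=\bigl|\sum_l(\mathcal{M}_{g_l}(k)\mathcal{M}_{g_l}^t(k))_{0_{\mathbb{Z}^2},p}\bigr|\le\sum_l|(\mathcal{M}_{g_l}(k)\mathcal{M}_{g_l}^t(k))_{0_{\mathbb{Z}^2},p}|$, the bracket is at least $\sum_l[(\mathcal{M}_{g_l}(k)\mathcal{M}_{g_l}^t(k))_{0_{\mathbb{Z}^2},0_{\mathbb{Z}^2}}-\sum_{p\neq0}|(\mathcal{M}_{g_l}(k)\mathcal{M}_{g_l}^t(k))_{0_{\mathbb{Z}^2},p}|]$; taking the minimum over $k\in\mathbb{N}_N^2$ (using periodicity) gives exactly $A/M^2$, whence $F_1(h)+F_2(h)\ge A\|h\|^2$ on $\ell_0$. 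The Bessel bound already established makes the transform operator $\theta$ continuous, so this inequality passes to all $h\in\ell^2(\mathbb{Z}\times\mathbb{Z},\mathbb{H})$ by approximation.

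The step I expect to be most delicate is the reflection identity and its correct use inside the index substitution: one must verify $G_p(k)=G_{-p}(k+pM)$ from the definition of $G_p$ (this uses both that the windows are real-valued and that summation over $n\in\mathbb{Z}^2$ is translation-invariant) and then track the change of variables carefully so that the two halves of the AM-GM split genuinely coincide after relabeling. The secondary technical point is the triangle-inequality passage from the per-window quantities to the combined constant $A$: the hypothesis is stated window-by-window, whereas the estimate naturally produces $\bigl|\sum_l(\mathcal{M}_{g_l}(k)\mathcal{M}_{g_l}^t(k))_{0_{\mathbb{Z}^2},p}\bigr|$; the inequality happens to point the right way, and this is precisely what forces $A$ to be defined with the sum of absolute values taken inside the sum over $l$.
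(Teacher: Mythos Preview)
Your proof is correct and follows essentially the same approach as the paper: both start from the decomposition of Lemma~\ref{lem3}, exploit the same reflection/symmetry property of the coefficients $G_p(k)$ to collapse the cross terms, and pass from $\ell_0$ to $\ell^2$ by density. The only cosmetic difference is that where the paper bounds the cross terms via Cauchy--Schwarz (obtaining two factors and showing they coincide), you use the AM--GM inequality $|h(k)||h(k+pM)|\le\tfrac12(|h(k)|^2+|h(k+pM)|^2)$ and the same index shift; both routes land on the identical estimate $|F_2(h)|\le M^2\sum_k|h(k)|^2\sum_{p\neq0}|G_p(k)|$. Your explicit handling of the triangle-inequality step linking the aggregated $|G_p(k)|$ to the per-window sum in the definition of $A$ is a point the paper glosses over, so your write-up is in fact slightly more careful there.
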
	
 
\begin{proof}
By Lemma \ref{lem3}, we have for all $h\in \ell_0(\mathbb{Z}\times \mathbb{Z},\mathbb{H})$:
\begin{equation}
\begin{array}{rcl}
\displaystyle{\sum_{l\in \mathbb{N}_L}\sum_{n\in \mathbb{Z}^2}\sum_{m\in \mathbb{N}_M^2}\left\vert \langle E_{\frac{m}{M}}T_{nN}g_l,h\rangle \right\vert^2}&=&M^2\displaystyle{\sum_{k\in \mathbb{Z}^2}\vert h(k)\vert^2 \left(\sum_{l\in \mathbb{N}_L}\mathcal{M}_{g_l}(k)\mathcal{M}_{g_l}^t(k)\right)_{0_{\mathbb{Z}^2},0_{\mathbb{Z}^2}}}\\
&+&M^2\displaystyle{\sum_{k\in \mathbb{Z}^2}\sum_{0_{\mathbb{Z}^2}\neq p\in \mathbb{Z}^2} \overline{h(k)} h(k+pM)\left(\sum_{l\in \mathbb{N}_L}\mathcal{M}_{g_l}(k)\mathcal{M}_{g_l}^t(k)\right)_{0_{\mathbb{Z}^2},p}}.
\end{array}
\end{equation}
By applying the triangle inequality and then the Cauchy-Schwarz inequality, we obtain:
$$\begin{array}{rcl}
&&\left \vert\displaystyle{\sum_{k\in \mathbb{Z}^2}\sum_{0_{\mathbb{Z}^2}\neq p\in \mathbb{Z}^2} \overline{h(k)} h(k+pM)\left(\sum_{l\in \mathbb{N}_L}\mathcal{M}_{g_l}(k)\mathcal{M}_{g_l}^t(k)\right)_{0_{\mathbb{Z}^2},p}}\right\vert\\
&\leq& \displaystyle{\sum_{k\in \mathbb{Z}^2}\sum_{0_{\mathbb{Z}^2}\neq p\in \mathbb{Z}^2} \vert \overline{h(k)}\vert \vert h(k+pM)\vert \left\vert\left(\sum_{l\in \mathbb{N}_L}\mathcal{M}_{g_l}(k)\mathcal{M}_{g_l}^t(k)\right)_{0_{\mathbb{Z}^2},p}\right\vert}\\
&\leq& \left(\displaystyle{\sum_{k\in \mathbb{Z}^2}\sum_{0_{\mathbb{Z}^2}\neq p\in \mathbb{Z}^2}\vert h(k)\vert^2 \left\vert\left(\sum_{l\in \mathbb{N}_L}\mathcal{M}_{g_l}(k)\mathcal{M}_{g_l}^t(k)\right)_{0_{\mathbb{Z}^2},p}\right\vert}\right)^{\frac{1}{2}}\\
&\times& \left(\displaystyle{\sum_{k\in \mathbb{Z}^2}\sum_{0_{\mathbb{Z}^2}\neq p\in \mathbb{Z}^2}\vert h(k+pM)\vert^2 \left\vert\left(\sum_{l\in \mathbb{N}_L}\mathcal{M}_{g_l}(k)\mathcal{M}_{g_l}^t(k)\right)_{0_{\mathbb{Z}^2},p}\right\vert}\right)^{\frac{1}{2}}.
\end{array}$$
We also have that: 
$$\begin{array}{rcl}
&&\displaystyle{\sum_{k\in \mathbb{Z}^2}\sum_{0_{\mathbb{Z}^2}\neq p\in \mathbb{Z}^2}\vert h(k+pM)\vert^2 \left\vert\left(\sum_{l\in \mathbb{N}_L}\mathcal{M}_{g_l}(k)\mathcal{M}_{g_l}^t(k)\right)_{0_{\mathbb{Z}^2},p}\right\vert}\\
&=&\displaystyle{\sum_{k\in \mathbb{Z}^2}\sum_{0_{\mathbb{Z}^2}\neq p\in \mathbb{Z}^2}\vert h(k)\vert^2 \left\vert\left(\sum_{l\in \mathbb{N}_L}\mathcal{M}_{g_l}(k)\mathcal{M}_{g_l}^t(k)\right)_{-p,0_{\mathbb{Z}^2}}\right\vert}\\
&=&\displaystyle{\sum_{k\in \mathbb{Z}^2}\sum_{0_{\mathbb{Z}^2}\neq p\in \mathbb{Z}^2}\vert h(k)\vert^2 \left\vert\left(\sum_{l\in \mathbb{N}_L}\mathcal{M}_{g_l}(k)\mathcal{M}_{g_l}^t(k)\right)_{p,0}\right\vert}\\
&=&\displaystyle{\sum_{k\in \mathbb{Z}^2}\sum_{0_{\mathbb{Z}^2}\neq p\in \mathbb{Z}^2}\vert h(k)\vert^2 \left\vert\left(\sum_{l\in \mathbb{N}_L}\mathcal{M}_{g_l}(k)\mathcal{M}_{g_l}^t(k)\right)_{0_{\mathbb{Z}^2},p}\right\vert}.
\end{array}$$
It follows that:
\begin{equation}
\begin{array}{rcl}
&&\left \vert\displaystyle{\sum_{k\in \mathbb{Z}^2}\sum_{0_{\mathbb{Z}^2}\neq p\in \mathbb{Z}^2} \overline{h(k)} h(k+pM)\left(\sum_{l\in \mathbb{N}_L}\mathcal{M}_{g_l}(k)\mathcal{M}_{g_l}^t(k)\right)_{0_{\mathbb{Z}^2},p}}\right\vert\\
&\leq&\displaystyle{\sum_{k\in \mathbb{Z}^2}\sum_{0_{\mathbb{Z}^2}\neq p\in \mathbb{Z}^2}\vert h(k)\vert^2 \left\vert\left(\sum_{l\in \mathbb{N}_L}\mathcal{M}_{g_l}(k)\mathcal{M}_{g_l}^t(k)\right)_{0_{\mathbb{Z}^2},p}\right\vert}.
\end{array}
\end{equation}
Then , combining $(3)$, $(4)$ and the hypothesis leads to: 
$$\begin{array}{rcl}
&&\displaystyle{\sum_{l\in \mathbb{N}_L}\sum_{n\in \mathbb{Z}^2}\sum_{m\in \mathbb{N}_M^2}\left\vert \langle E_{\frac{m}{M}}T_{nN}g_l,h\rangle \right\vert^2}\\
&\leq& M^2\displaystyle{\sum_{k\in \mathbb{Z}^2}\sum_{ p\in \mathbb{Z}^2}\vert h(k)\vert^2 \left\vert\left(\sum_{l\in \mathbb{N}_L}\mathcal{M}_{g_l}(k)\mathcal{M}_{g_l}^t(k)\right)_{0_{\mathbb{Z}^2},p}\right\vert}\\
&\leq&B\|h\|^2,
\end{array}$$
this for all $h\in \ell_0(\mathbb{Z}\times \mathbb{Z},\mathbb{H})$ which is dense in $\ell^2(\mathbb{Z}\times \mathbb{Z},\mathbb{H})$. Hence, $\mathcal{G}(g,L,M,N)$ is a Bessel Gabor system with Bessel bound $B$. Assume, now, that: 
$$A:=M^2 \min_{k\in \mathbb{N}_N^2}\sum_{l\in \mathbb{N}_L}\left[ (\mathcal{M}_{g_l}(k)\mathcal{M}_{g_l}^t(k))_{0_{\mathbb{Z}^2},0_{\mathbb{Z}^2}}-\sum_{0_{\mathbb{Z}^2}\neq p\in \mathbb{Z}^2}\left\vert (\mathcal{M}_{g_l}(k)\mathcal{M}_{g_l}^t(k))_{0_{\mathbb{Z}^2},p}\right\vert \right]> 0,$$
By Lemma \ref{lem3}, we have for all $h\in \ell_0(\mathbb{Z}\times \mathbb{Z},\mathbb{H})$:
$$\begin{array}{rcl}
&&\displaystyle{\sum_{l\in \mathbb{N}_L}\sum_{n\in \mathbb{Z}^2}\sum_{m\in \mathbb{N}_M^2}\left\vert \langle E_{\frac{m}{M}}T_{nN}g_l,h\rangle \right\vert^2}\\
&\geqslant& M^2\displaystyle{\sum_{k\in \mathbb{Z}^2}\vert h(k)\vert^2 \left(\sum_{l\in \mathbb{N}_L}\mathcal{M}_{g_l}(k)\mathcal{M}_{g_l}^t(k)\right)_{0_{\mathbb{Z}^2},0_{\mathbb{Z}^2}}}\\
&-&M^2\left\vert\displaystyle{\sum_{k\in \mathbb{Z}^2}\sum_{0_{\mathbb{Z}^2}\neq p\in \mathbb{Z}^2}\overline{h(k)}h(k+pM) \left(\sum_{l\in \mathbb{N}_L}\mathcal{M}_{g_l}(k)\mathcal{M}_{g_l}(k)\right)_{0_{\mathbb{Z}^2},p}}\right\vert.
\end{array}$$
Combinig this with $(4)$ leads to: 
$$\begin{array}{rcl}
&&\displaystyle{\sum_{l\in \mathbb{N}_L}\sum_{n\in \mathbb{Z}^2}\sum_{m\in \mathbb{N}_M^2}\left\vert \langle E_{\frac{m}{M}}T_{nN}g_l,h\rangle \right\vert^2}\\
&\geqslant& M^2\displaystyle{\sum_{k\in \mathbb{Z}^2}\vert f(k)\vert^2\left[ (\sum_{l\in \mathbb{N}_L}\mathcal{M}_{g_l}(k)\mathcal{M}_{g_l}^t(k))_{0_{\mathbb{Z}^2},0_{\mathbb{Z}^2}}-\sum_{0_{\mathbb{Z}^2}\neq p\in \mathbb{Z}^2}\left\vert (\sum_{l\in \mathbb{N}_L}\mathcal{M}_{g_l}(k)\mathcal{M}_{g_l}^t(k)\,)_{0_{\mathbb{Z}^2},p}\right\vert \right]}.
\end{array}$$
Combining this with the hypothesis and the $N$-periodicity of the terms leads to:
$$\sum_{l\in \mathbb{N}_L}\sum_{n\in \mathbb{Z}^2}\sum_{m\in \mathbb{N}_M^2}\left\vert \langle E_{\frac{m}{M}}T_{nN}g_l,h\rangle \right\vert^2\geqslant A\|f\|^2,$$
this for all $f\in \ell_0(\mathbb{Z}\times\mathbb{Z},\mathbb{H})$ wich is dense in $\ell^2(\mathbb{Z}\times \mathbb{Z},\mathbb{H})$. Hence, $\mathcal{G}(g,L,M,N)$ is a Gabor frame with frame bounds $A$ and $B$.
\end{proof}	
The following result is a direct consequence of the theorem above.
\begin{corollary}
If $g:=\{g_l\}_{l\in \mathbb{N}_L}\subset \ell_0(\mathbb{Z}\times \mathbb{Z},\mathbb{R})$, then $\mathcal{G}(g,L,M,N)$ is a Bessel Gabor system in $\ell^2(\mathbb{Z}\times \mathbb{Z},\mathbb{H})$, for arbitrary $L,M,N\in \mathbb{N}$.\\
\end{corollary}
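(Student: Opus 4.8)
The plan is to deduce this directly from Theorem~\ref{thmd}: a finite Bessel bound is guaranteed as soon as
\[
B = M^2 \max_{k\in\mathbb{N}_N^2}\sum_{p\in\mathbb{Z}^2}\left\vert\left(\sum_{l\in\mathbb{N}_L}\mathcal{M}_{g_l}(k)\mathcal{M}_{g_l}^t(k)\right)_{0_{\mathbb{Z}^2},p}\right\vert
\]
is finite, so the whole task reduces to checking that this maximum is actually a finite number when each window $g_l$ is finitely supported.

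First I would unwind the relevant matrix entry. From the definition $(\mathcal{M}_{g_l}(k))_{p,n}=g_l(k+pM-nN)$ one gets, for fixed $k$ and $p$,
\[
\left(\mathcal{M}_{g_l}(k)\mathcal{M}_{g_l}^t(k)\right)_{0_{\mathbb{Z}^2},p}=\sum_{n\in\mathbb{Z}^2}g_l(k-nN)\,g_l(k+pM-nN),
\]
so that, after applying the triangle inequality (once over $l$ and once inside the sum over $n$), the quantity $\sum_p|\cdots|$ is dominated by $\sum_{l\in\mathbb{N}_L}\sum_{p\in\mathbb{Z}^2}\sum_{n\in\mathbb{Z}^2}|g_l(k-nN)|\,|g_l(k+pM-nN)|$.

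The key observation is then the finite-support hypothesis. For fixed $l$ and $k$, only finitely many $n\in\mathbb{Z}^2$ satisfy $g_l(k-nN)\neq 0$; and for each such $n$, only finitely many $p\in\mathbb{Z}^2$ satisfy $g_l(k+pM-nN)\neq 0$. Hence the triple sum above reduces to a finite sum of finitely many nonzero terms, and since $\mathbb{N}_L$ is a finite index set, the inner double sum is finite for every fixed $k$. Because the maximum defining $B$ ranges over the finite set $\mathbb{N}_N^2$, we conclude $B<\infty$, and Theorem~\ref{thmd} yields that $\mathcal{G}(g,L,M,N)$ is a Bessel Gabor system with bound $B$, for arbitrary $L,M,N\in\mathbb{N}$.

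I do not expect a genuine obstacle here; the only point requiring care is the justification that the sum over $p$ has only finitely many nonzero terms, which is precisely where the finiteness of $\vert supp(g_l)\vert$ (rather than mere square-summability) is used.
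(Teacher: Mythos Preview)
Your proposal is correct and follows exactly the route the paper intends: the corollary is stated as a direct consequence of Theorem~\ref{thmd}, and your argument spells out precisely why the quantity $B$ in that theorem is finite when each $g_l$ has finite support. The paper gives no further details beyond declaring it a direct consequence, so your write-up is in fact more explicit than the original.
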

The following proposition presents a necessary and sufficient condition, in a particular case, for a quaternionic Gabor system to be a Gabor frame and  also presents the explicit expression of its frame operator and its inverse.
\begin{proposition}
Let $g:=\{g_l\}_{l\in \mathbb{N}_L}\subset \ell_0(\mathbb{Z}\times \mathbb{Z},\mathbb{R})$ such that for all $l\in \mathbb{N}_L$, $\vert supp(g_l)\vert< M$. Then, the following statements are equivalent:
\begin{enumerate}
\item $\mathcal{G}(g,L,M,N)$ is a frame for $\ell^2(\mathbb{Z}\times \mathbb{Z},\mathbb{H})$ with frame bounds $A\leq B$.
\item For all $k\in \mathbb{Z}^2$,
$$\frac{A}{M^2}\leq \sum_{l\in \mathbb{N}_L}\left(\mathcal{M}_{g_l}(k)\mathcal{M}_{g_l}^t(k)\right)_{0_{\mathbb{Z}^2},0_{\mathbb{Z}^2}}\leq \frac{B}{M^2}.$$
\item For all $k\in \mathbb{N}_N^2$,
$$\frac{A}{M^2}\leq \sum_{l\in \mathbb{N}_L}\left(\mathcal{M}_{g_l}(k)\mathcal{M}_{g_l}^t(k)\right)_{0_{\mathbb{Z}^2},0_{\mathbb{Z}^2}}\leq \frac{B}{M^2}.$$
\end{enumerate}
If one of these statements holds, the frame operator of \( \mathcal{G}(g,L,M,N) \) is explicitly defined for all \( h \in \ell^2(\mathbb{Z} \times \mathbb{Z}, \mathbb{H}) \) by:
$$
Sh = M^2  (\sum_{l \in N_L}M_{g_l}( \cdot ) M_{g_l}^t( \cdot ))_{0_{\mathbb{Z}^2},0_{\mathbb{Z}^2}} h.$$
And: 
$$
S^{-1}h:=\displaystyle{\frac{1}{M^2(\displaystyle{\sum_{l \in N_L}M_{g_l}( \cdot ) M_{g_l}^t( \cdot ))_{0_{\mathbb{Z}^2},0_{\mathbb{Z}^2}}}}}h.
$$
\end{proposition}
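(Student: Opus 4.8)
The plan is to deduce everything from the general characterization theorem proved just above, using the support hypothesis solely to collapse the matrices $\sum_{l\in\mathbb{N}_L}\mathcal{M}_{g_l}(k)\mathcal{M}_{g_l}^t(k)$ onto their diagonal. First I would record the explicit entries,
$$\left(\sum_{l\in\mathbb{N}_L}\mathcal{M}_{g_l}(k)\mathcal{M}_{g_l}^t(k)\right)_{m,p}=\sum_{l\in\mathbb{N}_L}\sum_{n\in\mathbb{Z}^2}g_l(k+mM-nN)\,g_l(k+pM-nN),$$
and observe that for $m\neq p$ the two arguments $k+mM-nN$ and $k+pM-nN$ are separated by $(m-p)M$, a displacement reaching absolute value at least $M$ along some coordinate axis. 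By the support hypothesis $|supp(g_l)|<M$, the window $g_l$ cannot be nonzero at two lattice points whose separation attains $M$ in a coordinate direction; hence the product vanishes for every $n$, and the matrix $\sum_{l}\mathcal{M}_{g_l}(k)\mathcal{M}_{g_l}^t(k)$ is diagonal. In particular all off-diagonal entries $(\,\cdot\,)_{0_{\mathbb{Z}^2},p}$ with $p\neq 0_{\mathbb{Z}^2}$ vanish.

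With diagonality in hand, the equivalence $(1)\Leftrightarrow(2)$ is a specialization of the characterization theorem: for a diagonal matrix $D(k)$ the operator inequality $\frac{A}{M^2}I\preceq D(k)\preceq\frac{B}{M^2}I$ reduces to the scalar bounds $\frac{A}{M^2}\leq D(k)_{p,p}\leq\frac{B}{M^2}$ for every $p$. Since $D(k)_{p,p}=\big(\sum_{l}\mathcal{M}_{g_l}(k+pM)\mathcal{M}_{g_l}^t(k+pM)\big)_{0_{\mathbb{Z}^2},0_{\mathbb{Z}^2}}$, letting $k$ range over $\mathbb{Z}^2$ converts ``all diagonal entries are bounded'' into exactly condition $(2)$. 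The equivalence $(2)\Leftrightarrow(3)$ then follows at once from the $N\mathbb{Z}^2$-periodicity of $\mathcal{M}_{g_l}(\cdot)\mathcal{M}_{g_l}^t(\cdot)$ recorded in Lemma \ref{lem5}.

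For the frame operator I would compute $Sh(k)$ directly from its definition. Expanding $E_{\frac{m}{M}}T_{nN}g_l(k)\,\langle E_{\frac{m}{M}}T_{nN}g_l,h\rangle$ and using that the real windows $g_l$ commute with every quaternion, one is reduced to summing $e^{2\pi i\frac{m_1}{M}k_1}e^{2\pi j\frac{m_2}{M}(k_2-k_2')}e^{-2\pi i\frac{m_1}{M}k_1'}$ over $m$ against $h(k')$. The delicate point is the non-commutativity of the $i$- and $j$-exponentials: I would sum over $m_2$ first, which by Lemma \ref{lem1} produces the real scalar $M\chi_{M\mathbb{Z}}(k_2-k_2')$, free to commute past the surviving $i$-exponentials, and then sum over $m_1$ to obtain $M\chi_{M\mathbb{Z}}(k_1-k_1')$. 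This forces $k'=k+pM$ and gives
$$Sh(k)=M^2\sum_{p\in\mathbb{Z}^2}\left(\sum_{l\in\mathbb{N}_L}\mathcal{M}_{g_l}(k)\mathcal{M}_{g_l}^t(k)\right)_{0_{\mathbb{Z}^2},p}h(k+pM),$$
after which the diagonality proved in the first paragraph annihilates every $p\neq 0_{\mathbb{Z}^2}$ term, leaving the claimed multiplication operator. Finally, under $(2)$ the scalar, $N\mathbb{Z}^2$-periodic function $k\mapsto M^2\big(\sum_{l}\mathcal{M}_{g_l}(k)\mathcal{M}_{g_l}^t(k)\big)_{0_{\mathbb{Z}^2},0_{\mathbb{Z}^2}}$ is bounded below by $A>0$ and above by $B$, so inverting the multiplication pointwise yields the stated expression for $S^{-1}$.

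I expect the main obstacle to be the bookkeeping of quaternionic non-commutativity in the frame-operator step: the order of the $m_1$- and $m_2$-summations genuinely matters, and at each stage one must verify that the factor extracted via Lemma \ref{lem1} is a \emph{real} scalar before commuting it past the remaining exponentials. The off-diagonal vanishing, although it is the conceptual heart of the statement, is by comparison a short consequence of the support hypothesis.
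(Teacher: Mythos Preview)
Your proof is correct and differs from the paper's in two places. For the equivalence $(1)\Leftrightarrow(2)$, the paper does not invoke the general characterization theorem; instead it argues $(1)\Rightarrow(2)$ via Proposition~\ref{prop1} and $(2)\Rightarrow(1)$ by appealing directly to Lemma~\ref{lem3}: the support hypothesis forces $F_2(h)=0$, so the frame sum reduces to $F_1(h)$, which condition $(2)$ bounds above and below. Your specialization of the operator inequality $\frac{A}{M^2}I\preceq\sum_l\mathcal{M}_{g_l}(k)\mathcal{M}_{g_l}^t(k)\preceq\frac{B}{M^2}I$ to its diagonal, together with the shift identity $D(k)_{p,p}=\big(\sum_l\mathcal{M}_{g_l}(k+pM)\mathcal{M}_{g_l}^t(k+pM)\big)_{0_{\mathbb{Z}^2},0_{\mathbb{Z}^2}}$, is a cleaner route that reuses the heavier result already proved. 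For the frame operator, the paper works at the level of the quadratic form, writing $\langle Sh,h\rangle=F_1(h)$ for $h\in\ell_0$ and identifying $S$ with the multiplication operator by self-adjointness and density; this sidesteps entirely the non-commutativity bookkeeping that you correctly flag as the delicate point in your direct pointwise computation of $Sh(k)$. Both derivations are valid; the paper's quadratic-form argument is shorter, while your $m_2$-then-$m_1$ summation yields the stronger pointwise identity $Sh(k)=M^2\sum_{p}\big(\sum_l\mathcal{M}_{g_l}(k)\mathcal{M}_{g_l}^t(k)\big)_{0_{\mathbb{Z}^2},p}h(k+pM)$ before the support hypothesis is used.
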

\begin{proof}
Note that, in this proof, we will use the notation from Lemma \ref{lem3}. 
By $N\mathbb{Z}^2$-periodicity of $\mathcal{M}_{g_l}(.)\mathcal{M}_{g_l}^t(.)$ (Lemma \ref{lem5}), we have $2.$ and $3.$ are equivalent.   By Proposition \ref{prop1}, it only remains to show that $2.$ implies $1.$. Since for all $l\in \mathbb{N}_L$, $\vert supp(g_l)\vert< M$, then $F_2(h)=0$ for all $h\in \ell_0(\mathbb{Z}\times \mathbb{Z},\mathbb{H})$. Hence, for all $h\in \ell_0(\mathbb{Z}\times \mathbb{Z},\mathbb{H})$, $\displaystyle{\sum_{l\in \mathbb{N}_L}\sum_{n\in \mathbb{Z}^2}\sum_{m\in \mathbb{N}_M^2} \left\vert \langle E_{\frac{m}{M}}T_{nN}g_l,h\rangle \right\vert^2}=F_1(h)$. By, hypothesis, we have that for all $k\in \mathbb{Z}^2$,
$$\frac{A}{M^2}\leq \sum_{l\in \mathbb{N}_L}\left(\mathcal{M}_{g_l}(k)\mathcal{M}_{g_l}^t(k)\right)_{0_{\mathbb{Z}^2},0_{\mathbb{Z}^2}}\leq \frac{B}{M^2}.$$
Then, $A\| h\|^2\leq F_1(h)\leq B\|h\|^2$. Hence, for all $h\in \ell_0(\mathbb{Z}\times \mathbb{Z},\mathbb{H})$, $$A\|h\|^2\leq \displaystyle{\sum_{l\in \mathbb{N}_L}\sum_{n\in \mathbb{Z}^2}\sum_{m\in \mathbb{N}_M^2} \left\vert \langle E_{\frac{m}{M}}T_{nN}g_l,h\rangle \right\vert^2}\leq B\|h\|^2.$$
By density of $\ell_0(\mathbb{Z}\times \mathbb{Z},\mathbb{H})$, we deduce that $\mathcal{G}(g,L,M,N)$ is a frame with frame bounds $A\leq B$. Assume, now, that one of these statements holds. Then for all \( h \in \ell_0(\mathbb{Z}\times \mathbb{Z},\mathbb{H}) \), we have:
$$\begin{array}{rcl}
\langle Sh, h \rangle = F_1(h)& = &M^2\displaystyle{\sum_{k \in \mathbb{Z}^2} ( \sum_{l \in N_L} \mathcal{M}_{g_l}(k) \mathcal{M}_{g_l}^t(k) )_{0_{\mathbb{Z}^2},0_{\mathbb{Z}^2}} |h(k)|^2}\\
&=&\displaystyle{\sum_{k\in \mathbb{Z}^2}  M^2( \sum_{l \in \mathbb{N}_L}\mathcal{M}_{g_l}(j) \mathcal{M}_{g_l}^t(j) )_{0_{\mathbb{Z}^2},0_{\mathbb{Z}^2}} \overline{h(k)} h(k)}\\
&=&\left\langle M^2 \displaystyle{ ( \sum_{l \in \mathbb{N}_L}\mathcal{M}_{g_l}( \cdot ) \mathcal{M}_{g_l}^t( \cdot ) )_{0_{\mathbb{Z}^2},0_{\mathbb{Z}^2}} h}, h \right\rangle.
\end{array}$$

By density of \( \ell_0(\mathbb{Z}\times \mathbb{Z},\mathbb{H}) \) in \( \ell_2(\mathbb{Z}\times \mathbb{Z},\mathbb{H}) \) and since \( h \mapsto M^2 ( \displaystyle{\sum_{l \in N_L}M_{g_l}( \cdot ) M_{g_l}^t( \cdot ) })_{0_{\mathbb{Z}^2},0_{\mathbb{Z}^2}} h \) is right $\mathbb{H}$-linear, bounded and self-adjoint, we have for all $h\in \ell^2(\mathbb{Z}\times \mathbb{Z},\mathbb{H})$:
$$
Sh = M^2  (\sum_{l \in N_L}M_{g_l}( \cdot ) M_{g_l}^t( \cdot ))_{0_{\mathbb{Z}^2},0_{\mathbb{Z}^2}} h.$$
And: 
$$
S^{-1}h:=\displaystyle{\frac{1}{M^2(\displaystyle{\sum_{l \in N_L}M_{g_l}( \cdot ) M_{g_l}^t( \cdot ))_{0_{\mathbb{Z}^2},0_{\mathbb{Z}^2}}}}}h.
$$
\end{proof}

The following theorem characterizes Parseval Gabor frames with real-valued windows by the matrix-valued functions $\mathcal{M}_{.}$.	
\begin{theorem}\label{thm5}
Let $g:=\{g_l\}_{l\in \mathbb{N}_L}\subset \ell^2(\mathbb{Z}\times \mathbb{Z},\mathbb{R})$. Then the following statements are equivalent:
\begin{enumerate}
\item $\mathcal{G}(g,L,M,N)$ is a Parseval Gabor frame for $\ell^2(\mathbb{Z}\times \mathbb{Z},\mathbb{H})$.
\item For all $k\in \mathbb{N}_N^2$, $p\in \mathbb{Z}^2$, $\left(\mathcal{M}_{g_l}(k)\mathcal{M}_{g_l}^t(k)\right)_{0_{\mathbb{Z}^2},p}=\displaystyle{\frac{1}{M^2}}\chi_{\{0_{\mathbb{Z}^2}\}}(p)$.
\end{enumerate}
\end{theorem}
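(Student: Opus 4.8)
The plan is to make everything rest on the expansion from Lemma \ref{lem3}, which for real-valued windows and $h\in\ell_0(\mathbb{Z}\times\mathbb{Z},\mathbb{H})$ reads $\sum_{l,n,m}|\langle E_{\frac{m}{M}}T_{nN}g_l,h\rangle|^2=F_1(h)+F_2(h)$. Throughout I abbreviate $G(k):=\sum_{l\in\mathbb{N}_L}\mathcal{M}_{g_l}(k)\mathcal{M}_{g_l}^t(k)$, so that statement $(2)$ asserts $(G(k))_{0_{\mathbb{Z}^2},p}=\frac{1}{M^2}\chi_{\{0_{\mathbb{Z}^2}\}}(p)$. Since the $g_l$ are real-valued, every entry $(G(k))_{m,p}=\sum_{l}\sum_{n}g_l(k+mM-nN)g_l(k+pM-nN)$ is a real number and is symmetric, $(G(k))_{m,p}=(G(k))_{p,m}$. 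Being a Parseval frame means $F_1(h)+F_2(h)=\|h\|^2$ for all $h$, so the whole task is to translate this scalar identity into the pointwise statements about the entries $(G(k))_{0_{\mathbb{Z}^2},p}$, and back.

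For $(2)\Rightarrow(1)$: by the $N\mathbb{Z}^2$-periodicity of $\mathcal{M}_{g_l}(\cdot)\mathcal{M}_{g_l}^t(\cdot)$ (Lemma \ref{lem5}), the hypothesis, assumed for $k\in\mathbb{N}_N^2$, holds for every $k\in\mathbb{Z}^2$. Feeding $(G(k))_{0_{\mathbb{Z}^2},0_{\mathbb{Z}^2}}=\frac{1}{M^2}$ into $F_1$ gives $F_1(h)=\|h\|^2$, while $(G(k))_{0_{\mathbb{Z}^2},p}=0$ for $p\neq 0_{\mathbb{Z}^2}$ forces $F_2(h)=0$, so the Parseval identity holds on the dense subspace $\ell_0$. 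The hypothesis also gives $M^2\sum_{p}|(G(k))_{0_{\mathbb{Z}^2},p}|=1<\infty$, so Theorem \ref{thmd} shows the system is Bessel and hence $h\mapsto\sum_{l,n,m}|\langle E_{\frac{m}{M}}T_{nN}g_l,h\rangle|^2$ is continuous; a density argument then propagates the identity to all of $\ell^2(\mathbb{Z}\times\mathbb{Z},\mathbb{H})$.

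For $(1)\Rightarrow(2)$: testing the identity against the point mass $h=\chi_{\{k'\}}$ annihilates $F_2$ (the product $\overline{h(k)}h(k+pM)$ vanishes for $p\neq 0_{\mathbb{Z}^2}$) and reduces to $M^2(G(k'))_{0_{\mathbb{Z}^2},0_{\mathbb{Z}^2}}=1$, which is the $p=0_{\mathbb{Z}^2}$ case and is just Proposition \ref{prop1} with $A=B=1$. The off-diagonal case is the crux: fix $k_0\in\mathbb{Z}^2$ and $0_{\mathbb{Z}^2}\neq p_0\in\mathbb{Z}^2$ and test against the two-point function $h=a\chi_{\{k_0\}}+b\chi_{\{k_0+p_0M\}}$ with $a,b\in\mathbb{H}$. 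Then $\|h\|^2=|a|^2+|b|^2$, and the $p=0_{\mathbb{Z}^2}$ result already gives $F_1(h)=|a|^2+|b|^2$; tracking which pairs $(k,p)$ survive in $F_2$ yields $F_2(h)=M^2\big[\,\overline{a}b\,(G(k_0))_{0_{\mathbb{Z}^2},p_0}+\overline{b}a\,(G(k_0+p_0M))_{0_{\mathbb{Z}^2},-p_0}\,\big]$.

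The key computation is the index identity $(G(k_0+p_0M))_{0_{\mathbb{Z}^2},-p_0}=(G(k_0))_{p_0,0_{\mathbb{Z}^2}}=(G(k_0))_{0_{\mathbb{Z}^2},p_0}$, which follows directly from the entry definition (the shift by $p_0M$ in the base point cancels the column index $-p_0$) together with the symmetry of $G$. This collapses $F_2(h)$ to $2M^2(G(k_0))_{0_{\mathbb{Z}^2},p_0}\,\mathrm{Re}(\overline{a}b)$, where $\mathrm{Re}$ is the scalar part and we used that $(G(k_0))_{0_{\mathbb{Z}^2},p_0}$ is real. The Parseval identity now reads $2M^2(G(k_0))_{0_{\mathbb{Z}^2},p_0}\,\mathrm{Re}(\overline{a}b)=0$ for all $a,b\in\mathbb{H}$; choosing $a=b=1$ makes $\mathrm{Re}(\overline{a}b)=1$ and forces $(G(k_0))_{0_{\mathbb{Z}^2},p_0}=0$. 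Since $k_0,p_0$ are arbitrary, this is exactly statement $(2)$. I expect the off-diagonal step to be the main obstacle: one must design a test function isolating a single entry and then handle the noncommutativity of $\mathbb{H}$ inside $F_2$, which is resolved precisely by the index symmetry that pairs the two surviving terms into a real multiple of $\mathrm{Re}(\overline{a}b)$.
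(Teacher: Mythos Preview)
Your proposal is correct and follows essentially the same route as the paper: both directions rest on Lemma \ref{lem3}, the diagonal entry is obtained via the point mass (Proposition \ref{prop1}), and the off-diagonal entries are isolated by testing against a two-point function supported at $k_0$ and $k_0+p_0M$. Your treatment of the off-diagonal step is in fact more careful than the paper's---you explicitly track both surviving terms in $F_2(h)$ and invoke the index identity $(G(k_0+p_0M))_{0,-p_0}=(G(k_0))_{0,p_0}$ to combine them, whereas the paper writes only one term---but since you then specialize to $a=b=1$, which is exactly the paper's choice, the arguments coincide.
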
	
\begin{proof}
Note that in this proof, we will use the notation from Lemma \ref{lem3}. Assume $2.$. Then, by Lemma \ref{lem3}, $F_1(h)=\|h\|^2$ and $F_2(h)=0$ for all $h\in \ell_0(\mathbb{Z}\times \mathbb{Z},\mathbb{H})$. Hence, by density of $\ell_0(\mathbb{Z}\times \mathbb{Z},\mathbb{H})$ in $\ell^2(\mathbb{Z}\times \mathbb{Z},\mathbb{H})$, we deduce that $\mathcal{G}(g,L,M,N)$ is a Parseval frame for $\ell^2(\mathbb{Z}\times \mathbb{Z},\mathbb{H})$. Conversely, assume that $\mathcal{G}(g,L,M,N)$ is a Parseval Gabor frame, then by Proposition \ref{prop1}, for all $k\in \mathbb{N}_N^2$, $\left(\mathcal{M}_{g_l}(k)\mathcal{M}_{g_l}(k)\right)_{0_{\mathbb{Z}^2},0_{\mathbb{Z}^2}}=\displaystyle{\frac{1}{M^2}}$. Thus, $F_1(h)=\|h\|^2$ for all $h\in \ell_0(\mathbb{Z}\times \mathbb{Z},\mathbb{H})$. Then, $F_2(h)=0$ for all $h\in \ell_0(\mathbb{Z}\times \mathbb{Z},\mathbb{H})$. Fix $k'\in \mathbb{N}_N^2$ and $p'\in \mathbb{Z}^2-\{0_{\mathbb{Z}^2}\}$ and set $h\in \ell_0(\mathbb{Z}\times \mathbb{Z},\mathbb{H})$ defined by $h(k')=h(k'+p'M)=1$ and $h(k)=0$ elsewhere. Then, $F_2(h)=M^2\left(\displaystyle{\sum_{l\in \mathbb{N}_L}\mathcal{M}_{g_l}(k')\mathcal{M}_{g_l}^t(k')}\right)_{0_{\mathbb{Z}^2},p'}=0$. Hence, by arbitrariness of $k'$ and $p'$, we deduce that for all $k\in \mathbb{N}_N^2$ and $p\in \mathbb{Z}^2-\{0_{\mathbb{Z}^2}\}$, $\left(\displaystyle{\sum_{l\in \mathbb{N}_L}\mathcal{M}_{g_l}(k)\mathcal{M}_{g_l}^t(k)}\right)_{0_{\mathbb{Z}^2},p}=0$. 
\end{proof}	
\begin{remark}
If $\vert supp(g_l)\vert< M$, then for all $k\in \mathbb{Z}^2$ and $p\in \mathbb{Z}^2-\{0_{\mathbb{Z}^2}\}$, we have: $\left(\displaystyle{\mathcal{M}_{g_l}(k)\mathcal{M}_{g_l}^t(k)}\right)_{0_{\mathbb{Z}^2},p}=0$.\\
\end{remark}
Here, we present a theoretical constructive example of Parseval Gabor frames in the case where the number of windows is a perfect square. 

\begin{example}
Let $L,M,N\in \mathbb{N}$. Write $L=K^2$ and suppose that $L$ satisfy the necessary condition seen in Proposition \ref{prop}, i.e., $N^2\leq LM^2$. Here, we are concerned with the strict inequality. The case of equality will be studied in Section 3.., i.e., we suppose that $N^2< LM^2$. Then, $N< KM$. Let $R\in \mathbb{N}$ be the maximal integer satisfying $RM\leq N$. For all $r\in \mathbb{N}_R$, define $I_r$ as the set of the $(r+1)$-th $M$ elements of $\mathbb{N}_N$ and $I_R$ as the set of the rest elements of $\mathbb{N}_N$.
We have, then, $(I_r)_{r\in \mathbb{N}_{R+1}}$ is a partition of $\mathbb{N}_N$ and $card(I_r)=M$ for all $r\in \mathbb{N}_R$ and $card(I_R)< M$. Define, now, for all $r,r'\in \mathbb{N}_{R+1}$, $A_{(r,r')}:=I_r\times I_{r'}$ and $g_{(r,r')}:=\displaystyle{\frac{1}{M}\chi_{A_{(r,r')}}}$, and if $R+1< K$, set $g_{(r,r')}:=0$ for all $R+1\leq r,r'\leq K-1$. Observe that $\vert supp(g_{(r,r')})\vert<M$, then, for all $k\in \mathbb{N}_N^2$ and $p\in \mathbb{Z}^2-\{0_{\mathbb{Z}^2}\}$,  $\left(\displaystyle{\mathcal{M}_{g_{(r,r')}}(k)\mathcal{M}_{g_{(r,r')}}^t(k)}\right)_{0_{\mathbb{Z}^2},p}=0$. And by a simple computation, we obtain that, for all $k\in \mathbb{N}_N^2$, $\left(\displaystyle{\mathcal{M}_{g_{(r,r')}}(k)\mathcal{M}_{g_{(r,r')}}^t(k)}\right)_{0_{\mathbb{Z}^2},0_{\mathbb{Z}^2}}=\displaystyle{\frac{1}{M^2}}$. Hence, Theorem \ref{thm5} shows that $\mathcal{G}(g,L,M,N)$ is a Parseval frame for $\ell^2(\mathbb{Z}\times \mathbb{Z},\mathbb{H})$, where $g:=\{g_{(r,r')}\}_{r,r'\in \mathbb{N}_K}$.
\end{example}

\section{Discrete quaternionic Gabor orthonormal bases}
In this section, we investigate Gabor orthonormal bases for the space \( \ell^2(\mathbb{Z} \times \mathbb{Z}, \mathbb{H}) \). First, we characterize the quaternionic Gabor systems that form orthonormal bases for \( \ell^2(\mathbb{Z} \times \mathbb{Z}, \mathbb{H}) \) within the class of Parseval quaternionic Gabor frames, and we give an interesting property regarding the number of windows in a Gabor orthonormal basis. Then, we characterize such systems using matrix-valued functions \( \mathcal{M}_{.} \), and finally, we analyze the admissibility of these systems in \( \ell^2(\mathbb{Z} \times \mathbb{Z}, \mathbb{H}) \).
\begin{theorem}\label{thm2}
Let $g:=\{g_l\}_{l\in \mathbb{N}_L}\subset \ell^2(\mathbb{Z}\times \mathbb{Z},\mathbb{H})$. Assume that $\mathcal{G}(g,L,M,N)$ is a Parseval farame. Then, the following statements are equivalent:
\begin{enumerate}
\item $\mathcal{G}(g,L,M,N)$ is a an orthonormal basis for $\ell^2(\mathbb{Z}\times \mathbb{Z},\mathbb{H})$.
\item $N^2=LM^2.$\\
\end{enumerate}
\end{theorem}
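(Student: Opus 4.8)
The plan is to reduce the statement to the elementary fact that a Parseval frame is an orthonormal basis precisely when all of its elements are unit vectors, and then to read off the norms of the frame elements from the computation already carried out in the proof of Proposition \ref{prop}. Concretely, I would first record that, since $T_{nN}$ and $E_{\frac{m}{M}}$ are isometries (as noted in the preliminaries, $\|E_{\frac{m}{M}}T_{nN}g_l\|=\|g_l\|$), every element of $\mathcal{G}(g,L,M,N)$ generated by the window $g_l$ has norm exactly $\|g_l\|$. Thus the whole system consists of unit vectors if and only if $\|g_l\|=1$ for every $l\in\mathbb{N}_L$.

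Next I would assemble the two norm identities needed. On one hand, applying the Parseval identity to a fixed frame element $E_{\frac{m}{M}}T_{nN}g_l$ itself and isolating the diagonal term gives $\|g_l\|^2\le 1$ for all $l$, together with the sharpening that $\|g_l\|=1$ forces every inner product of that element against a distinct frame element to vanish. On the other hand, the computation in the proof of Proposition \ref{prop} shows that for a Parseval frame $\sum_{l\in\mathbb{N}_L}\|g_l\|^2=\frac{N^2}{M^2}$.

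With these in hand the equivalence is quick. If $\mathcal{G}(g,L,M,N)$ is an orthonormal basis, then in particular all its elements are unit vectors, so $\|g_l\|=1$ for every $l$ and summing gives $L=\sum_{l\in\mathbb{N}_L}\|g_l\|^2=\frac{N^2}{M^2}$, i.e.\ $N^2=LM^2$. Conversely, if $N^2=LM^2$ then $\sum_{l\in\mathbb{N}_L}\|g_l\|^2=L$; since there are $L$ summands each at most $1$, every $\|g_l\|$ must equal $1$. Hence all frame elements are unit vectors, and the sharpened Parseval identity above yields orthogonality of any two distinct frame elements; together with completeness (which follows from the lower frame bound, since any $h$ orthogonal to every frame element satisfies $\|h\|^2=\sum\left|\langle E_{\frac{m}{M}}T_{nN}g_l,h\rangle\right|^2=0$) this makes the system an orthonormal basis.

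The step I expect to require the most care is the quaternionic version of \emph{Parseval plus unit norms implies orthonormal}. One must use that $\langle f,f\rangle=\|f\|^2$ is a nonnegative real, so that the diagonal term of the Parseval sum is exactly $\|f\|^4$, and keep in mind that the inner products are quaternion-valued while only their moduli enter the frame inequality; the modulation operators not being right $\mathbb{H}$-linear plays no role here, since only norms and the frame inequality are used. I would also verify that an orthonormal, complete system in this right quaternionic Hilbert space does constitute an orthonormal basis in the intended sense.
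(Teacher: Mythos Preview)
Your proposal is correct and follows essentially the same route as the paper: both invoke equation~(1) from Proposition~\ref{prop} to get $\sum_{l\in\mathbb{N}_L}\|g_l\|^2=\frac{N^2}{M^2}$, use $\|g_l\|\le 1$ from the Parseval condition, and conclude via the averaging argument that each $\|g_l\|=1$ precisely when $N^2=LM^2$. The paper's proof is actually terser than yours---it simply asserts the final implication ``all unit vectors $\Rightarrow$ orthonormal basis'' without discussion---so your explicit treatment of this step in the quaternionic setting is, if anything, an improvement in rigor rather than a deviation in approach.
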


\begin{proof}
Assume that $\mathcal{G}(g,L,M,N)$ is a Parseval farame. If $\mathcal{G}(g,L,M,N)$ is an orthonormal basis, then $\|g_l\|=\|E_{\frac{m}{M}}T_{nN}g_l\|= 1$ for all $l\in \mathbb{N}_L$, where $n\in \mathbb{Z}^2$ and $m\in \mathbb{N}_M^2$ are arbitrary. Then, by $(1)$, we obtain that $L=\displaystyle{\frac{N^2}{M^2}}$. Conversely, if $N^2=LM^2$, then by $(1)$, we obtain that $\displaystyle{\sum_{l\in \mathbb{N}_L}\|g_l\|^2=L}$. Combining this with the fact that $\|g_l\|\leq 1$ for all $l\in \mathbb{N}_L$, we conclude that $\|g_l\|=1$ for all $l\in \mathbb{N}_L$. Hence, $\mathcal{G}(g,L,M,N)$ is an orthonormal basis for $\ell^2(\mathbb{Z}\times \mathbb{Z},\mathbb{H})$.\\
\end{proof}

The following lemma shows that the number of windows in a  Gabor orthonormal basis must necessarily be a perfect square.
\begin{lemma}\label{lm}
Let $g:=\{g_l\}_{l\in \mathbb{N}_L}\subset \ell^2(\mathbb{Z}\times \mathbb{Z},\mathbb{H})$ such that $\mathcal{G}(g,L,M,N)$ is an orthonormal basis for $\ell^2(\mathbb{Z}\times \mathbb{Z},\mathbb{H})$. Then, $L$ is a perfect square.
\end{lemma}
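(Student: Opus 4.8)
The plan is to lean on Theorem~\ref{thm2}, which already extracts the numerical relation governing orthonormal bases, and then reduce the claim to a single elementary divisibility fact. The first step is to notice that any orthonormal basis for $\ell^2(\mathbb{Z}\times\mathbb{Z},\mathbb{H})$ is automatically a Parseval frame: orthonormality together with completeness yields Parseval's identity $\sum_{l,n,m}\left|\langle E_{\frac{m}{M}}T_{nN}g_l,h\rangle\right|^2=\|h\|^2$ for every $h\in\ell^2(\mathbb{Z}\times\mathbb{Z},\mathbb{H})$. This places us squarely inside the hypothesis of Theorem~\ref{thm2}, so applying that theorem to $\mathcal{G}(g,L,M,N)$ gives the identity $N^2=LM^2$.

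From here I would solve for $L$, obtaining $L=N^2/M^2$. Since $L$ is a positive integer, this forces $M^2\mid N^2$. The only genuine content left is the number-theoretic implication $M^2\mid N^2\Rightarrow M\mid N$, which I would establish via unique factorization: writing the exponent of a prime $p$ in $M$ and $N$ as $a_p$ and $b_p$ respectively, the condition $M^2\mid N^2$ reads $2a_p\le 2b_p$ for every $p$, hence $a_p\le b_p$ for every $p$, i.e.\ $M\mid N$. Setting $d:=N/M\in\mathbb{N}$, we then have $L=(N/M)^2=d^2$, so $L$ is a perfect square, which is exactly the assertion.

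I expect the argument to be short, since Theorem~\ref{thm2} already does the analytic work of producing $N^2=LM^2$. The main (and rather mild) obstacle is simply recording the divisibility lemma $M^2\mid N^2\Rightarrow M\mid N$ cleanly, as this is the one place where a step beyond Theorem~\ref{thm2} is required; everything else is a direct substitution.
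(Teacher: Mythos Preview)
Your proof is correct and follows essentially the same route as the paper: both arguments invoke Theorem~\ref{thm2} to obtain $N^2=LM^2$ and then reduce to an elementary divisibility statement. The only cosmetic difference is that the paper writes $N/M$ in lowest terms $p/q$ and shows $L=p^2$, whereas you use prime factorization to get $M\mid N$ and $L=(N/M)^2$; these are equivalent arguments.
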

\begin{proof}
Let $p,q\in \mathbb{N}$ such that $p\wedge q=1$ and $\displaystyle{\frac{N}{M}=\frac{p}{q}}$. by Theorem \ref{thm2}, we have that $N^2=LM^2$, then $p^2=Lq^2$. Since $p\wedge q=1$, then $p^2\wedge q^2=1$. Since $p^2$ devides $Lq^2$, then $p^2$ devides $L$. Let $k\in \mathbb{N}$ such that $L=p^2k$, then $p^2=p^2kq^2$, then $kq^2=1$, thus $k=1$. Hence, $L=p^2$.
\end{proof}

The following theorem characterizes  Gabor systems with real-valued windows that are orthonormal bases for $\ell^2(\mathbb{Z}\times \mathbb{Z},\mathbb{H})$ By the matrix-valued functions $\mathcal{M}_{.}$.
\begin{theorem}\label{thm6}
Let $g:=\{g_l\}_{l\in \mathbb{N}_L}\subset \ell^2(\mathbb{Z}\times \mathbb{Z},\mathbb{R})$. Then the following statements are equivalent:
\begin{enumerate}
\item $\mathcal{G}(g,L,M,N)$ is a an orthonormal basis  for $\ell^2(\mathbb{Z}\times \mathbb{Z},\mathbb{H})$.
\item \begin{enumerate}
\item $N^2=LM^2$,
\item For all $k\in \mathbb{N}_N^2$, $p\in \mathbb{Z}^2$, $\left(\mathcal{M}_{g_l}(k)\mathcal{M}_{g_l}^t(k)\right)_{0_{\mathbb{Z}^2},p}=\displaystyle{\frac{1}{M^2}}\chi_{\{0_{\mathbb{Z}^2}\}}(p)$.
\end{enumerate}
\end{enumerate}
\end{theorem}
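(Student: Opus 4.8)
The plan is to obtain the equivalence almost immediately by chaining the two previously established characterizations, namely Theorem \ref{thm2} (which, \emph{within the class of Parseval frames}, equates the orthonormal-basis property with the numerical relation $N^2 = LM^2$) and Theorem \ref{thm5} (which, for real-valued windows, equates the Parseval property with the matrix identity in statement $2.(b)$). The one elementary fact that glues these together is that every orthonormal basis is automatically a Parseval frame: if $\mathcal{G}(g,L,M,N)$ is an orthonormal basis for $\ell^2(\mathbb{Z}\times\mathbb{Z},\mathbb{H})$, then Parseval's identity $\|h\|^2 = \sum |\langle E_{\frac{m}{M}}T_{nN}g_l, h\rangle|^2$ holds for every $h$, so $A=B=1$ are frame bounds. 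This observation is what allows Theorem \ref{thm2} to be invoked in both directions, since that theorem presupposes the Parseval hypothesis.

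For the implication $1. \Rightarrow 2.$, I would first record that an orthonormal basis is a Parseval frame by the remark above. Since the windows $g_l$ are real-valued, Theorem \ref{thm5} applied to this Parseval frame yields condition $2.(b)$ verbatim. Next, because $\mathcal{G}(g,L,M,N)$ is simultaneously a Parseval frame and an orthonormal basis, Theorem \ref{thm2} forces $N^2 = LM^2$, which is condition $2.(a)$. This completes the forward direction.

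For the converse $2. \Rightarrow 1.$, I would run the same logic backward. Starting from condition $2.(b)$ and again using that the windows are real-valued, Theorem \ref{thm5} guarantees that $\mathcal{G}(g,L,M,N)$ is a Parseval frame for $\ell^2(\mathbb{Z}\times\mathbb{Z},\mathbb{H})$. Having secured the Parseval property, I may now apply Theorem \ref{thm2}, whose hypothesis is exactly that $\mathcal{G}(g,L,M,N)$ is a Parseval frame; condition $2.(a)$ supplies the equality $N^2 = LM^2$, and so Theorem \ref{thm2} concludes that the system is an orthonormal basis.

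Since the substantive analytic and combinatorial work has already been carried out in Theorems \ref{thm2} and \ref{thm5}, there is no genuine computational obstacle here. The only point requiring care is the \emph{logical threading}: both directions must pass through the Parseval property as an intermediate stage, because Theorem \ref{thm2} cannot be applied until that property is in hand. Thus I would make explicit, at the start of each direction, either that an orthonormal basis is Parseval (forward) or that $2.(b)$ yields a Parseval frame via Theorem \ref{thm5} (converse), before invoking Theorem \ref{thm2}. One should also note the harmless suppression of the summation $\sum_{l\in\mathbb{N}_L}$ in the statement of $2.(b)$, reading the matrix quantity as $\left(\sum_{l\in\mathbb{N}_L}\mathcal{M}_{g_l}(k)\mathcal{M}_{g_l}^t(k)\right)_{0_{\mathbb{Z}^2},p}$ exactly as it appears in Theorem \ref{thm5}.
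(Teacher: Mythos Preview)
Your proposal is correct and follows exactly the paper's approach: the paper's own proof is the single line ``By Theorem \ref{thm5} and Theorem \ref{thm2} together,'' and your write-up simply unpacks the logical threading of that combination. Your added remarks about the Parseval hypothesis and the implicit $\sum_{l\in\mathbb{N}_L}$ are accurate clarifications of points the paper leaves tacit.
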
	
\begin{proof}
By Theorem \ref{thm5} and Theorem \ref{thm2} together.
\end{proof}	
The following theorem characterizes the existence of orthonormal Gabor bases for $\ell^2(\mathbb{Z}\times \mathbb{Z},\mathbb{H})$  in terms of the parameters \( L \), \( M \), and \( N \).
\begin{theorem}\label{thm8}
Let $M,N\in \mathbb{N}$.  The following statements are equivalent:
\begin{enumerate}
\item There exists $g:=\{g_l\}_{l\in \mathbb{N}_L}\subset \ell^2(\mathbb{Z}\times \mathbb{Z},\mathbb{H})$ such that $\mathcal{G}(g,L,M,N)$ is an orthonormal basis for $\ell^2(\mathbb{Z}\times \mathbb{Z},\mathbb{H})$.
\item $N^2= LM^2$.
\end{enumerate}
\end{theorem}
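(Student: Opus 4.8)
The plan is to read statement $(1)$ with $L$ existentially quantified and to prove the two implications separately, leaning on the structural results already established for Parseval frames. The direction $(1)\Rightarrow(2)$ requires essentially no work: an orthonormal basis is in particular a Parseval frame, so Theorem \ref{thm2} applies to $\mathcal{G}(g,L,M,N)$ and forces $N^2=LM^2$ for the corresponding $L$.

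For the converse $(2)\Rightarrow(1)$, I would first extract the arithmetic content of $N^2=LM^2$. Since $L$ is a positive integer, $N/M=\sqrt{L}$ is a rational number whose square is an integer, hence itself an integer; setting $K:=N/M\in\mathbb{N}$ gives $N=KM$ and $L=K^2$ (the same reasoning as in Lemma \ref{lm}). The equality case is exactly what guarantees that $\mathbb{N}_N$ decomposes into $K$ consecutive blocks of length exactly $M$ with no leftover, and this is the feature that upgrades the construction from a Parseval frame to an orthonormal basis.

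The construction itself mirrors the Example of Section 2 in the cleaner equality setting. I would partition $\mathbb{N}_N$ into the blocks $I_r:=\{rM,rM+1,\dots,(r+1)M-1\}$ for $r\in\mathbb{N}_K$, set $A_{(r,r')}:=I_r\times I_{r'}$ for $(r,r')\in\mathbb{N}_K^2$ so that $\{A_{(r,r')}\}$ is a partition of $\mathbb{N}_N^2$ into $L=K^2$ squares each of cardinality $M^2$, and define the finitely supported real-valued windows $g_{(r,r')}:=\frac{1}{M}\chi_{A_{(r,r')}}$, which satisfy $\vert supp(g_{(r,r')})\vert=M-1<M$. The verification then goes through Theorem \ref{thm6}. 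Condition $(a)$ is $N^2=LM^2$, which holds by construction. For condition $(b)$, the bound $\vert supp(g_{(r,r')})\vert<M$ together with the Remark following Theorem \ref{thm5} kills every off-diagonal entry, so $\left(\mathcal{M}_{g_{(r,r')}}(k)\mathcal{M}_{g_{(r,r')}}^t(k)\right)_{0_{\mathbb{Z}^2},p}=0$ whenever $p\neq 0_{\mathbb{Z}^2}$; while for $p=0_{\mathbb{Z}^2}$ one computes
$$\sum_{(r,r')\in\mathbb{N}_K^2}\left(\mathcal{M}_{g_{(r,r')}}(k)\mathcal{M}_{g_{(r,r')}}^t(k)\right)_{0_{\mathbb{Z}^2},0_{\mathbb{Z}^2}}=\sum_{(r,r')\in\mathbb{N}_K^2}\sum_{n\in\mathbb{Z}^2}\vert g_{(r,r')}(k-nN)\vert^2=\frac{1}{M^2},$$
since for $k\in\mathbb{N}_N^2$ only the term $n=0_{\mathbb{Z}^2}$ survives (any nonzero multiple of $N$ shifts $k$ out of $\mathbb{N}_N^2\supseteq A_{(r,r')}$) and the partition property leaves exactly one nonvanishing indicator at $k$. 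Thus both parts of Theorem \ref{thm6}$(2)$ hold and $\mathcal{G}(g,L,M,N)$ is an orthonormal basis, which establishes $(2)\Rightarrow(1)$.

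The only genuine subtlety, and the step I expect to be the main obstacle, is isolating the right tiling in the converse: the blocks must cover $\mathbb{N}_N^2$ exactly, and it is precisely the equality $N^2=LM^2$ (equivalently $M\mid N$) that rules out the partial final block appearing in the strict-inequality Example, namely the block that would otherwise produce a Parseval frame failing to be a basis. Once the exact tiling is secured, the normalization $\|g_{(r,r')}\|=1$ and the verification of Theorem \ref{thm6}$(b)$ are routine.
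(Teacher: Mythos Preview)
Your proposal is correct and follows essentially the same route as the paper: both directions invoke Theorem~\ref{thm2} and Theorem~\ref{thm6} respectively, and the explicit windows $g_{(r,r')}=\frac{1}{M}\chi_{I_r\times I_{r'}}$ built from the partition of $\mathbb{N}_N$ into $K$ blocks of length $M$ are exactly the paper's construction. Your verification of condition~(b) is in fact slightly more careful than the paper's, since you correctly place the sum over the windows before concluding that the diagonal entry equals $\frac{1}{M^2}$.
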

\begin{proof}
By Theorem \ref{thm2}, we have  $1.$ implies $2.$. Conversely, assume that $N^2= LM^2$. By the proof of  Lemma  \ref{lm}, we can write $L=K^2$, where $K\in \mathbb{N}$. Then, $N=KM$. For all $k\in \mathbb{N}_K$, define $I_k$ as the set of the $(k+1)$-th $M$ elements of $\mathbb{N}_N$. Then, $(I_k)_{k\in \mathbb{N}_K}$ is a partition of $\mathbb{N}_N$ where $card(I_k)=M$ for all $k\in \mathbb{N}_K$. Define, now, for all $k,k'\in \mathbb{N}_K$, $A_{(k,k')}:=I_k\times I_{k'}$ and then $g_{(k,k')}:=\displaystyle{\frac{1}{M}\chi_{A_{(k,k')}}}$. By a simple calculation, we obtain:
$\left(\mathcal{M}_{g_{(k,k')}}(k)\mathcal{M}_{g_{(k,k')}}^t(k)\right)_{0_{\mathbb{Z}^2},p}=0$ for all $k\in \mathbb{N}_N^2$ and $p\in\mathbb{Z}^2-\{0_{\mathbb{Z}^2}\}$ since $\vert supp(g_{(k,k')})\vert =M-1< M$ for all $l\in \mathbb{N}_L$, and $\left(\mathcal{M}_{g_{(k,k')}}(k)\mathcal{M}_{g_{(k,k')}}^t(k)\right)_{0_{\mathbb{Z}^2},0_{\mathbb{Z}^2}}=\displaystyle{\frac{1}{M^2}}$. Then, Theorem \ref{thm6} shows that $\mathcal{G}(g,L,M,N)$ is a Gabor orthonormal basis for $\ell^2(\mathbb{Z}\times \mathbb{Z},\mathbb{H})$, where $g:=\{g_{(r,r')}\}_{r,r'\in \mathbb{N}_K}$.
\end{proof}	

\begin{example}\hspace{1cm}
\begin{itemize}
\item For $N=10$ and $M=4$, there does not exist a Gabor orthonormal basis for $\ell^2(\mathbb{Z}\times \mathbb{Z},\mathbb{H})$.
\item Let $N=10$ and $M=5$. All associated Gabor orthonormal bases for $\ell^2(\mathbb{Z}\times \mathbb{Z},\mathbb{H})$ must be constructed with $4$ windows. Here, we present an example of one such basis. Define:
$A_0:=\{0,1,2,3,4\}\times \{0,1,2,3,4\}$, $A_1:=\{0,1,2,3,4\}\times \{5,6,7,8,9\}$, $A_2:=\{5,6,7,8,9\}\times \{0,1,2,3,4\}$ and $A_3:=\{5,6,7,8,9\}\times \{5,6,7,8,9\}$. Hence, $\mathcal{G}(g,4,5,10)$ is an orthonormal basis for $\ell^2(\mathbb{Z}\times \mathbb{Z},\mathbb{H})$.
\end{itemize}
\end{example}

\section{Duality of discrete quaternionic Gabor frames}
In frame theory, there is a strong relationship between Parseval frames and duality; in fact, a frame is Parseval if and only if it is dual to itself. We will, therefore, be inspired by Theorem \ref{thm5} to characterize the duality of two quaternionic Gabor systems with real-valued windows.

Denote by  $\ell_M(\mathbb{Z}\times \mathbb{Z},\mathbb{H})$, the set of $M$-periodic sequences on $\mathbb{Z}\times \mathbb{Z}$. $\ell_M(\mathbb{Z}\times \mathbb{Z},\mathbb{H})$ equipped with the inner product defined for $f,g\in \ell_M(\mathbb{Z}\times \mathbb{Z},\mathbb{H})$ by $\langle f,g \rangle:=\displaystyle{\sum_{k\in \mathbb{N}_M^2}\overline{f(k)}g(k)}$ is a a finite-dimensional  right quaternionic Hilbert space with repect to the right scalar multiplication. The following lemma presents an orthonormal basis for this space.
\begin{lemma}\label{lem2}\vspace{0.5cm}
$\left\{ \; (k_1,k_2)\mapsto\displaystyle{\frac{1}{M}e^{2\pi i\frac{m_1}{M}k_1}e^{2\pi j\frac{m_2}{M}k_2}}\right\}_{m_1,m_2\in \mathbb{N}_M}$ is an orthonormal basis for $\ell_M(\mathbb{Z}\times \mathbb{Z},\mathbb{H})$.\\
\end{lemma}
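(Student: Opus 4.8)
The plan is to establish the two defining properties of an orthonormal basis in the finite-dimensional right quaternionic Hilbert space $\ell_M(\mathbb{Z}\times\mathbb{Z},\mathbb{H})$: first that the indicated family is orthonormal, and then that it is complete, the latter being obtained from a dimension count rather than from an explicit reconstruction. Throughout, write $e_{m_1,m_2}(k):=\frac{1}{M}e^{2\pi i\frac{m_1}{M}k_1}e^{2\pi j\frac{m_2}{M}k_2}$ for $m_1,m_2\in\mathbb{N}_M$.

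First I would compute $\langle e_{m_1,m_2},e_{m_1',m_2'}\rangle=\sum_{k\in\mathbb{N}_M^2}\overline{e_{m_1,m_2}(k)}\,e_{m_1',m_2'}(k)$. Using $\overline{qp}=\overline{p}\,\overline{q}$ together with $\overline{e^{i\theta}}=e^{-i\theta}$ and $\overline{e^{j\theta}}=e^{-j\theta}$, the conjugation reverses the two exponential factors, so each summand reads $\frac{1}{M^2}e^{-2\pi j\frac{m_2}{M}k_2}e^{-2\pi i\frac{m_1}{M}k_1}e^{2\pi i\frac{m_1'}{M}k_1}e^{2\pi j\frac{m_2'}{M}k_2}$. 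The crucial structural observation, and the point where the non-commutativity of $\mathbb{H}$ must be handled with care, is that the two $i$-exponentials lie in the commuting subfield spanned by $1$ and $i$ and hence combine into $e^{2\pi i\frac{m_1'-m_1}{M}k_1}$, while the two outer $j$-factors do not depend on $k_1$; the inner $k_1$-summation may therefore be pulled between them by distributivity, giving $e^{-2\pi j\frac{m_2}{M}k_2}\bigl(\sum_{k_1\in\mathbb{N}_M}e^{2\pi i\frac{m_1'-m_1}{M}k_1}\bigr)e^{2\pi j\frac{m_2'}{M}k_2}$. By Lemma \ref{lem1} the inner sum equals $M\chi_{M\mathbb{Z}}(m_1'-m_1)$, which for $m_1,m_1'\in\mathbb{N}_M$ is $M\delta_{m_1,m_1'}$. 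When $m_1=m_1'$, the surviving $j$-exponentials commute and merge into $e^{2\pi j\frac{m_2'-m_2}{M}k_2}$, and a second application of Lemma \ref{lem1} over $k_2$ produces $M\delta_{m_2,m_2'}$. Collecting the prefactor $1/M^2$ then yields $\langle e_{m_1,m_2},e_{m_1',m_2'}\rangle=\delta_{m_1,m_1'}\delta_{m_2,m_2'}$, which is the desired orthonormality.

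It remains to prove completeness, and here I would avoid any further computation. Since an $M$-periodic sequence is determined by its $M^2$ values on $\mathbb{N}_M^2$, the space $\ell_M(\mathbb{Z}\times\mathbb{Z},\mathbb{H})$ is isomorphic, as a right $\mathbb{H}$-vector space, to $\mathbb{H}^{M^2}$, hence has dimension $M^2$ over the division ring $\mathbb{H}$. The family $\{e_{m_1,m_2}\}_{m_1,m_2\in\mathbb{N}_M}$ has exactly $M^2$ members, and an orthonormal family is right-linearly independent (taking $\langle e_{m_1',m_2'},\cdot\rangle$ of a vanishing right-linear combination isolates each coefficient via orthonormality and right-linearity of the inner product in its second slot). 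An $M^2$-element linearly independent family in an $M^2$-dimensional right $\mathbb{H}$-vector space is a basis, so the family is an orthonormal basis.

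I expect the orthonormality computation, specifically the justification for interchanging the $k_1$-summation with the non-commuting $j$-factors, to be the only genuinely delicate step; once it is done, completeness follows immediately from the dimension count. An alternative to the dimension argument would be to verify the reconstruction identity $f=\sum_{m_1,m_2}e_{m_1,m_2}\,\langle e_{m_1,m_2},f\rangle$ directly, but this merely reruns the same orthogonality computation and is therefore less economical.
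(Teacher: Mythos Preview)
Your proof is correct. The paper actually states this lemma without proof, so there is nothing to compare against; your argument---orthonormality via Lemma~\ref{lem1} after carefully separating the commuting $i$- and $j$-exponentials, followed by a dimension count---is a clean and complete justification.
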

We will need the following useful lemma.
\begin{lemma}\label{lem4}
Let $\{g_l\}_{l\in \mathbb{N}_L},\{h_l\}_{l\in \mathbb{N}_L}\subset \ell^2(\mathbb{Z}\times \mathbb{Z},\mathbb{R})$. Then, we have for all  $f,\phi\in \ell_0(\mathbb{Z}\times \mathbb{Z},\mathbb{H})$:
$$
\begin{array}{rcl}
&&\displaystyle{\sum_{l\in \mathbb{N}_L}\sum_{n\in \mathbb{Z}^2}\sum_{m\in \mathbb{N}_M^2}\langle f,E_{\frac{m}{M}}T_{nN}g\rangle\langle E_{\frac{m}{M}}T_{nN}h,\phi\rangle}\\
&=&M^2\displaystyle{\sum_{k\in \mathbb{Z}^2}\sum_{p\in \mathbb{Z}^2} \left(\sum_{l\in \mathbb{N}_L}\mathcal{M}_{g_l}(k)\mathcal{M}_{h_l}^t(k)\right)_{0_{\mathbb{Z}^2},p} \overline{f(k)}\phi(k+pM).}
\end{array}$$
\end{lemma}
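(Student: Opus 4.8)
The plan is to treat this identity as the polarized (bilinear) analogue of Lemma \ref{lem3}: specializing $h_l = g_l$ and $f = \phi = h$ recovers exactly the expression $\sum_{l,n,m}|\langle E_{\frac{m}{M}}T_{nN}g_l,h\rangle|^2$ computed there, so the argument should run in complete parallel. First I would expand both inner products from the definition of $\langle\cdot,\cdot\rangle$, writing $\langle f, E_{\frac{m}{M}}T_{nN}g_l\rangle = \sum_k \overline{f(k)}e^{2\pi i\frac{m_1}{M}k_1}g_l(k-nN)e^{2\pi j\frac{m_2}{M}k_2}$ and, using that $h_l$ is real so that conjugation merely flips the signs of the exponents, $\langle E_{\frac{m}{M}}T_{nN}h_l, \phi\rangle = \sum_{k'}e^{-2\pi j\frac{m_2}{M}k'_2}h_l(k'-nN)e^{-2\pi i\frac{m_1}{M}k'_1}\phi(k')$. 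Multiplying these and merging into a double sum over $k,k'$ is legitimate because $f,\phi\in\ell_0(\mathbb{Z}\times\mathbb{Z},\mathbb{H})$, so only finitely many $(k,k')$ contribute.

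Next I would use the real-valuedness of $g_l$ and $h_l$ to commute them to the front, leaving the $m$-dependent factor $e^{2\pi i\frac{m_1}{M}k_1}e^{2\pi j\frac{m_2}{M}(k_2-k'_2)}e^{-2\pi i\frac{m_1}{M}k'_1}$. This is the one genuinely quaternionic point and the step I expect to be the main obstacle: since $i$ and $j$ do not commute, one cannot merge the two $i$-exponentials across the intervening $j$-exponential. The resolution is to sum over $m_2$ first: by Lemma \ref{lem1}, $\sum_{m_2\in\mathbb{N}_M}e^{2\pi j\frac{m_2}{M}(k_2-k'_2)} = M\chi_{M\mathbb{Z}}(k_2-k'_2)$ is a real scalar, hence it commutes with the surrounding $i$-exponentials. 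The $m_1$-sum then collapses by Lemma \ref{lem1} as well, and the full $m$-summation yields $M^2\chi_{M\mathbb{Z}}(k_1-k'_1)\chi_{M\mathbb{Z}}(k_2-k'_2)$, that is, $M^2$ times the indicator of $k'-k\in M\mathbb{Z}^2$.

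Finally I would set $k' = k + pM$ with $p\in\mathbb{Z}^2$ ranging over the surviving indices, reducing the sum to $M^2\sum_k\sum_p\big(\sum_l\sum_n g_l(k-nN)h_l(k+pM-nN)\big)\overline{f(k)}\phi(k+pM)$, where the inner real sum may be moved freely past $\overline{f(k)}$. It then remains to identify $\sum_n g_l(k-nN)h_l(k+pM-nN)$ with the matrix entry $(\mathcal{M}_{g_l}(k)\mathcal{M}_{h_l}^t(k))_{0_{\mathbb{Z}^2},p}$, which is immediate from the definition $(\mathcal{M}_h(k))_{a,n} = h(k+aM-nN)$ and the row-by-column rule for the product with the transpose. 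All interchanges of summation are justified because the $k,k',m,l$ ranges are finite (the first two by finite support, the last two by construction) and, for fixed $k,k'$, the series $\sum_n |g_l(k-nN)|\,|h_l(k'-nN)|$ converges by the Cauchy-Schwarz inequality since $g_l,h_l\in\ell^2(\mathbb{Z}\times\mathbb{Z},\mathbb{R})$.
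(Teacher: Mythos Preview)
Your argument is correct. It differs from the paper's own proof in how the $m$-summation is carried out: the paper periodizes each inner product by writing $k\in\mathbb{Z}^2$ as $k_0+pM$ with $k_0\in\mathbb{N}_M^2$, packages the resulting $M$-periodic sequences into functions $F_n,H_n\in\ell_M(\mathbb{Z}\times\mathbb{Z},\mathbb{H})$, and then invokes Lemma~\ref{lem2} (the orthonormal-basis/Parseval identity in $\ell_M$) to obtain $\sum_{m}\langle F_n,e_m\rangle\langle e_m,H_n\rangle=M^2\langle F_n,H_n\rangle$. You instead keep the double sum over $k,k'$ and collapse the $m$-sum directly with Lemma~\ref{lem1}, summing over $m_2$ first so that the resulting real scalar removes the $j$-exponential and lets the two $i$-exponentials combine. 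Your route is the natural polarization of the paper's proof of Lemma~\ref{lem3} and is slightly more elementary; the paper's route via Lemma~\ref{lem2} is more structural and sidesteps the need to order the $m_1$- and $m_2$-sums carefully, at the cost of introducing the auxiliary periodic functions $F_n,H_n$. Either way one lands on the same identity after identifying $\sum_n g_l(k-nN)h_l(k+pM-nN)$ with $(\mathcal{M}_{g_l}(k)\mathcal{M}_{h_l}^t(k))_{0_{\mathbb{Z}^2},p}$.
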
	
\begin{proof}
Note that in this proof, we denote \( a = (a_1, a_2) \) for every \( a \in \mathbb{Z}^2 \) and, without any possible confusion, the inner product in \( \ell^2(\mathbb{Z}\times \mathbb{Z},\mathbb{H}) \) and that in \( \ell_M(\mathbb{Z}\times \mathbb{Z},\mathbb{H}) \) will be denoted by the same notation \( \langle \cdot, \cdot \rangle \). Let $f,\phi\in \ell_0(\mathbb{Z}\times \mathbb{Z},\mathbb{H})$, we have:
$$
\begin{array}{rcl}
&&\displaystyle{\sum_{l\in \mathbb{N}_L}\sum_{n\in \mathbb{Z}^2}\sum_{m\in \mathbb{N}_M^2}\langle f,E_{\frac{m}{M}}T_{nN}g_l\rangle\langle E_{\frac{m}{M}}T_{nN}h_l,\phi\rangle}\\
&=&\displaystyle{\sum_{l\in \mathbb{N}_L}\sum_{n\in \mathbb{Z}^2}\sum_{m\in \mathbb{N}_M^2}\left(\sum_{k\in \mathbb{Z}^2}\overline{f(k)}e^{2\pi i \frac{m_1}{M}k_1}g_l(k-nN)e^{2\pi j \frac{m_2}{M}k_2}\right)}\\
&\times& \left(\displaystyle{\sum_{k\in\mathbb{Z}^2}e^{-2\pi j\frac{m_2}{M}k_2}\overline{h(k-nN)}e^{-2\pi i \frac{m_1}{M}k_1}\phi(k)}\right).\\
&=&\displaystyle{\sum_{l\in \mathbb{N}_L}\sum_{n\in \mathbb{Z}^2}\sum_{m\in \mathbb{N}_M^2}\left(\sum_{k\in \mathbb{N}_M^2}\sum_{p\in \mathbb{Z}^2}\overline{f(k+pM)}e^{2\pi i \frac{m_1}{M}k_1}g_l(k+pM-nN)e^{2\pi j \frac{m_2}{M}k_2}\right)}\\
&\times& \left(\displaystyle{\sum_{k\in\mathbb{N}_M^2}\sum_{p\in \mathbb{Z}^2} e^{-2\pi j\frac{m_2}{M}k_2}\overline{h(k+pM-nN)}e^{-2\pi i \frac{m_1}{M}k_1}\phi(k+pM)}\right).\\
&=&\displaystyle{\sum_{l\in \mathbb{N}_L}\sum_{n\in \mathbb{Z}^2}\sum_{m\in \mathbb{N}_M^2}\left(\sum_{k\in \mathbb{N}_M^2}\sum_{p\in \mathbb{Z}^2}\overline{f(k+pM)}g_l(k+pM-nN) e^{2\pi i \frac{m_1}{M}k_1}e^{2\pi j \frac{m_2}{M}k_2}\right)}\\
&\times& \left(\displaystyle{\sum_{k\in\mathbb{N}_M^2}\sum_{p\in \mathbb{Z}^2} e^{-2\pi j\frac{m_2}{M}k_2}e^{-2\pi i \frac{m_1}{M}k_1}h(k+pM-nN)\phi(k+pM)}\right).\\
&=&\displaystyle{\sum_{l\in \mathbb{N}_L}\sum_{n\in \mathbb{Z}^2}\sum_{m\in \mathbb{N}_M^2}\left(\sum_{k\in \mathbb{N}_M}^2 \overline{F_n(k)}e^{2\pi i \frac{m_1}{M}k_1}e^{2\pi j \frac{m_2}{M}k_2}\right)}\\
&\times& \left(\displaystyle{\sum_{k\in \mathbb{N}_M^2}e^{-2\pi i \frac{m_1}{M}k_1}e^{-2\pi j\frac{m_2}{M}k_2}H_n(k)}\right)\\
&=&\displaystyle{\sum_{l\in \mathbb{N}_L}\sum_{n\in \mathbb{Z}^2}\sum_{m\in \mathbb{N}_M^2}\langle F_n(k),e^{2\pi i \frac{m_1}{M}.}e^{2\pi j \frac{m_2}{M}.}\rangle \langle e^{2\pi i \frac{m_1}{M}.}e^{2\pi j \frac{m_2}{M}.}\rangle}\\
&=&M^2\displaystyle{\sum_{l\in \mathbb{N}_L}\sum_{n\in \mathbb{Z}^2}\langle F_n(k),H_n(k)\rangle}\\
\end{array}$$

$$\begin{array}{rcl}
&=&M^2\displaystyle{\sum_{l\in \mathbb{N}_L}\sum_{n\in \mathbb{Z}^2}\sum_{k\in\mathbb{N}_M}\overline{F_n(k)}H_n(k)}\\
&=&M^2\displaystyle{\sum_{l\in \mathbb{N}_L}\sum_{n\in \mathbb{Z}^2}\sum_{k\in \mathbb{N}_M^2}\sum_{p\in \mathbb{Z}^2}\overline{f(k+pM)}g(k+pM-nN)H_n(k)}\\
&=&M^2\displaystyle{\sum_{l\in \mathbb{N}_L}\sum_{n\in \mathbb{Z}^2}\sum_{k\in \mathbb{N}_M^2}\sum_{p\in \mathbb{Z}^2}\overline{f(k+pM)}g(k+pM-nN)H_n(k+pM)}\\
&=&M^2\displaystyle{\sum_{l\in \mathbb{N}_L}\sum_{n\in \mathbb{Z}^2}\sum_{k\in \mathbb{Z}^2}\overline{f(k)}g(k-nN)\sum_{p\in \mathbb{Z}^2}h(k+pM-nN)\phi(k+pM)}\\
&=&M^2\displaystyle{\sum_{l\in \mathbb{N}_L}\sum_{k\in \mathbb{Z}^2}\sum_{p\in \mathbb{Z}^2}\sum_{n\in \mathbb{Z}^2}g(k-nN)h(k+pM-nN)\overline{f(k)}\phi(k+pM)}\\
&=&M^2\displaystyle{\sum_{l\in \mathbb{N}_L}\sum_{k\in \mathbb{Z}^2}\sum_{p\in \mathbb{Z}^2}\left(\mathcal{M}_{g_l}(k)\mathcal{M}_{h_l}(k)\right)_{0_{\mathbb{Z}^2},p}\overline{f(k)}\phi(k+pM)}\\
&=&M^2\displaystyle{\sum_{k\in \mathbb{Z}^2}\sum_{p\in \mathbb{Z}^2}\left(\sum_{l\in \mathbb{N}_L}\mathcal{M}_{g_l}(k)\mathcal{M}_{h_l}(k)\right)_{0_{\mathbb{Z}^2},p}\overline{f(k)}\phi(k+pM)},\\
\end{array}$$
where $F_n(.):=\displaystyle{\sum_{p\in \mathbb{Z}^2}f(.+pM)g(.+pM-nN)}$ and $H_n(.):=\displaystyle{\sum_{p\in \mathbb{Z}^2}\phi(.+pM)h(.+pM-nN)}$ which are $M$-periodic. The above change of summation is justified by the fact that \( f,\phi\in \ell_0(\mathbb{Z} \times \mathbb{Z}, \mathbb{H}) \), the commutativity of \( g_l \) and \(h_l\) with the other terms is justified by the fact that they take real values, and note that we used Lemma \ref{lem2}.
\end{proof}

We have arrived at the following theorem that characterizes the duality of two Gabor systems in $\ell^2(\mathbb{Z}\times \mathbb{Z},\mathbb{H})$ with real-valued windows.
\begin{theorem}\label{thm9}
Let $g:=\{g_l\}_{l\in \mathbb{N}_L},\; h:=\{h_l\}_{l\in \mathbb{N}_L}\subset \ell^2(\mathbb{Z}\times \mathbb{Z},\mathbb{H})$. Then, the following statements are equivalent:
\begin{enumerate}
\item $\mathcal{G}(g,L,M,N)$ and $\mathcal{G}(h,L,M,N)$ are dual.
\item For all $k\in \mathbb{N}_N^2$ and $p\in \mathbb{Z}^2$, $\left(\displaystyle{\sum_{l\in \mathbb{N}_L}\mathcal{M}_{g_l}(k)\mathcal{M}_{h_l}^t(k)}\right)_{0_{\mathbb{Z}^2},p}=\displaystyle{\frac{1}{M^2}}\chi_{\{0_{\mathbb{Z}^2}\}}(p).$
\end{enumerate}
\end{theorem}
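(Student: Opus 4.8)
The plan is to reduce duality to a single weak reconstruction identity and then match it term-by-term against the formula furnished by Lemma \ref{lem4}. Since two dual systems are in particular Bessel, the statement that $\mathcal{G}(g,L,M,N)$ and $\mathcal{G}(h,L,M,N)$ are dual is equivalent, after pairing the synthesis and analysis operators and using the density of $\ell_0(\mathbb{Z}\times\mathbb{Z},\mathbb{H})$, to the identity
\[\langle f,\phi\rangle=\sum_{l\in\mathbb{N}_L}\sum_{n\in\mathbb{Z}^2}\sum_{m\in\mathbb{N}_M^2}\langle f,E_{\frac{m}{M}}T_{nN}g_l\rangle\langle E_{\frac{m}{M}}T_{nN}h_l,\phi\rangle\]
holding for all $f,\phi\in\ell_0(\mathbb{Z}\times\mathbb{Z},\mathbb{H})$. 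The left-hand side is simply $\sum_{k\in\mathbb{Z}^2}\overline{f(k)}\phi(k)$, while Lemma \ref{lem4} rewrites the right-hand side as
\[M^2\sum_{k\in\mathbb{Z}^2}\sum_{p\in\mathbb{Z}^2}\left(\sum_{l\in\mathbb{N}_L}\mathcal{M}_{g_l}(k)\mathcal{M}_{h_l}^t(k)\right)_{0_{\mathbb{Z}^2},p}\overline{f(k)}\phi(k+pM).\]
Thus duality is equivalent to the equality of these two expressions for all $f,\phi\in\ell_0(\mathbb{Z}\times\mathbb{Z},\mathbb{H})$.

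For the direction $(2)\Rightarrow(1)$, I would invoke the $N\mathbb{Z}^2$-periodicity of $\mathcal{M}_{g_l}(\cdot)\mathcal{M}_{h_l}^t(\cdot)$ from Lemma \ref{lem5} (note $\mathcal{M}_{h_l}^*=\mathcal{M}_{h_l}^t$ since the windows are real-valued) to promote condition $(2)$ from $k\in\mathbb{N}_N^2$ to all $k\in\mathbb{Z}^2$. Substituting $\left(\sum_{l}\mathcal{M}_{g_l}(k)\mathcal{M}_{h_l}^t(k)\right)_{0_{\mathbb{Z}^2},p}=\frac{1}{M^2}\chi_{\{0_{\mathbb{Z}^2}\}}(p)$ collapses the double sum to its $p=0_{\mathbb{Z}^2}$ term and cancels the prefactor $M^2$, leaving exactly $\sum_{k\in\mathbb{Z}^2}\overline{f(k)}\phi(k)=\langle f,\phi\rangle$; hence the weak duality identity holds and $(1)$ follows.

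For $(1)\Rightarrow(2)$, I would isolate the matrix entries by choosing indicator test functions. Fix $k'\in\mathbb{Z}^2$. Taking $f=\phi=\chi_{\{k'\}}$ makes the left side equal $1$ and the right side equal $M^2\left(\sum_{l}\mathcal{M}_{g_l}(k')\mathcal{M}_{h_l}^t(k')\right)_{0_{\mathbb{Z}^2},0_{\mathbb{Z}^2}}$, yielding the value $\frac{1}{M^2}$ at $p=0_{\mathbb{Z}^2}$; taking $f=\chi_{\{k'\}}$ and $\phi=\chi_{\{k'+p'M\}}$ for $p'\neq 0_{\mathbb{Z}^2}$ makes the left side vanish while the right side reduces to $M^2\left(\sum_{l}\mathcal{M}_{g_l}(k')\mathcal{M}_{h_l}^t(k')\right)_{0_{\mathbb{Z}^2},p'}$, forcing that entry to be $0$. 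By arbitrariness of $k'$ this gives condition $(2)$ for every $k\in\mathbb{Z}^2$, a fortiori on $\mathbb{N}_N^2$.

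The delicate point is the first paragraph: one must verify that the correct conjugation and the left/right ordering of the two quaternionic inner products make the weak duality identity coincide \emph{exactly} with the left-hand side of Lemma \ref{lem4}, since in a right $\mathbb{H}$-module the scalars emerge on a prescribed side and the inner product is $\mathbb{H}$-linear only in its second slot. Once this bookkeeping is pinned down, the term isolation in $(1)\Rightarrow(2)$ is clean precisely because the windows are real-valued: the entries of $\sum_{l}\mathcal{M}_{g_l}(k)\mathcal{M}_{h_l}^t(k)$ are then real scalars, so the indicator choices genuinely extract a single entry without interference from quaternion non-commutativity.
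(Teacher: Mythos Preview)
Your proof is correct and follows essentially the same route as the paper: both directions rest on Lemma~\ref{lem4}, with $(2)\Rightarrow(1)$ being immediate upon substitution (you make the use of Lemma~\ref{lem5} explicit where the paper leaves it implicit), and $(1)\Rightarrow(2)$ obtained by plugging in indicator test functions $f=\phi=\chi_{\{k'\}}$ and then $f=\chi_{\{k'\}}$, $\phi=\chi_{\{k'+p'M\}}$ to isolate each matrix entry. Your closing remarks about the quaternionic bookkeeping and the need for real-valued windows are apt; the paper tacitly assumes these (the section is restricted to real-valued windows despite the $\mathbb{H}$ in the theorem statement).
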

\begin{proof}
By Lemma \ref{lem4}, it is clear that $2.$ implies  $1.$. Conversely, assume that $\mathcal{G}(g,L,M,N)$ and $\mathcal{G}(h,L,M,N)$ are dual. Fix $k'\in \mathbb{N}_N^2$ and set $f,\phi\in \ell_0(\mathbb{Z}\times \mathbb{Z},\mathbb{H})$ defined by  $f=\phi:=\chi_{\{k'\}}$. By Lemma \ref{lem4}, we obtain that $\left(\displaystyle{\sum_{l\in \mathbb{N}_L}\mathcal{M}_{g_l}(k')\mathcal{M}_{h_l}^t(k')}\right)_{0_{\mathbb{Z}^2},0_{\mathbb{Z}^2}}=\displaystyle{\frac{1}{M^2}}$. Then, by Lemma \ref{lem4}, we have for all  $f,\phi\in \ell_0(\mathbb{Z}\times \mathbb{Z},\mathbb{H})$:
$$
\begin{array}{rcl}
&&\langle f,\phi\rangle =\displaystyle{\sum_{l\in \mathbb{N}_L}\sum_{n\in \mathbb{Z}^2}\sum_{m\in \mathbb{N}_M^2}\langle f,E_{\frac{m}{M}}T_{nN}g\rangle\langle E_{\frac{m}{M}}T_{nN}h,\phi\rangle}\\
&=&M^2\displaystyle{\sum_{k\in \mathbb{Z}^2}\sum_{p\in \mathbb{Z}^2} \left(\sum_{l\in \mathbb{N}_L}\mathcal{M}_{g_l}(k)\mathcal{M}_{h_l}^t(k)\right)_{0_{\mathbb{Z}^2},p} \overline{f(k)}\phi(k+pM).}\\
&=&\langle f, \phi\rangle +M^2\displaystyle{\sum_{k\in \mathbb{Z}^2}\sum_{0_{\mathbb{Z}^2}\neq p\in \mathbb{Z}^2} \left(\sum_{l\in \mathbb{N}_L}\mathcal{M}_{g_l}(k)\mathcal{M}_{h_l}^t(k)\right)_{0_{\mathbb{Z}^2},p} \overline{f(k)}\phi(k+pM).}
\end{array}$$
Then, \begin{equation}
\displaystyle{\sum_{k\in \mathbb{Z}^2}\sum_{0_{\mathbb{Z}^2}\neq p\in \mathbb{Z}^2} \left(\sum_{l\in \mathbb{N}_L}\mathcal{M}_{g_l}(k)\mathcal{M}_{h_l}^t(k)\right)_{0_{\mathbb{Z}^2},p} \overline{f(k)}\phi(k+pM)}=0,
\end{equation} for all $f,\phi\in \ell_0(\mathbb{Z}\times \mathbb{Z},\mathbb{H})$.
Fix $k'\in \mathbb{N}_N^2$ and $p'\in \mathbb{Z}^2-\{0_{\mathbb{Z}^2}\}$ and set $f,\phi \in \ell_0(\mathbb{Z}\times\mathbb{Z},\mathbb{H})$ defined by $f:=\chi_{\{k'\}}$ and $\phi:=\chi_{\{k'+p'M\}}$. Then, by $(5)$, we obtain: $$\left(\displaystyle{\sum_{l\in \mathbb{N}_L}\mathcal{M}_{g_l}(k')\mathcal{M}_{h_l}^t(k')}\right)_{0_{\mathbb{Z}^2},p'}=0.$$
Hence, by arbitrariness of $k'$ and $p'$, we deduce that for all $k\in \mathbb{N}_N^2$ and $p\in \mathbb{Z}^2-\{0_{\mathbb{Z}^2}\}$, $$\left(\displaystyle{\sum_{l\in \mathbb{N}_L}\mathcal{M}_{g_l}(k)\mathcal{M}_{h_l}^t(k)}\right)_{0_{\mathbb{Z}^2},p}=0.$$
\end{proof}
\section{Stability of quaternionic Gabor frames}
In this section, we provide the stability of quetrnionic Gabor frames with real-valued windows under some perturbations. We will need the following lemma from frame thoery in quaternionic Hilbert spaces:
\begin{lemma}\label{lemma}
Let $\mathcal{H}$ be a right quaternionic Hilbert space and $\{u_i\}_{i\in I}$ be a frame for $\mathcal{H}$ with frame bounds $A\leq B$. Let $\{v_i\}_{i\in I}\subset \mathcal{H}$ and $0<R< A$ such that for all $u\in \mathcal{H}$, we have:
$$\sum_{i\in I}\left\vert \langle u_i-v_i,u\rangle \right\vert^2\leq R\|u\|^2.$$
Then, $\{v_i\}_{i\in I}$ is a frame for $\mathcal{H}$ with frame bounds: 
$$A\left(1-\sqrt{\frac{R}{A}}\right)^2,\;\; B\left(1+\sqrt{\frac{R}{B}}\right)^2.$$
\end{lemma}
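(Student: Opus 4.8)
The plan is to reduce the whole statement to the triangle inequality in the right quaternionic Hilbert space $\ell^2(I,\mathbb{H})$, following the classical Paley--Wiener perturbation argument, the point being that the frame bounds are nothing but the squares of the norms of the relevant analysis maps. First I would observe that for each fixed $u\in\mathcal{H}$ the three quaternion-valued sequences $\{\langle u_i,u\rangle\}_{i\in I}$, $\{\langle v_i,u\rangle\}_{i\in I}$ and $\{\langle u_i-v_i,u\rangle\}_{i\in I}$ all belong to $\ell^2(I,\mathbb{H})$: the first because $\{u_i\}_{i\in I}$ is a frame and hence Bessel, the third by the standing hypothesis $\sum_{i\in I}|\langle u_i-v_i,u\rangle|^2\le R\|u\|^2$, and the second because, by additivity of the inner product in its first slot, $\langle v_i,u\rangle=\langle u_i,u\rangle-\langle u_i-v_i,u\rangle$ exhibits it as the difference of two $\ell^2$-sequences.

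Next I would record the three norm estimates in $\ell^2(I,\mathbb{H})$ that drive the argument. Writing $\|\{c_i\}\|:=\big(\sum_{i\in I}|c_i|^2\big)^{1/2}$, the frame inequality for $\{u_i\}_{i\in I}$ gives $\sqrt{A}\,\|u\|\le \|\{\langle u_i,u\rangle\}\|\le\sqrt{B}\,\|u\|$, while the hypothesis gives $\|\{\langle u_i-v_i,u\rangle\}\|\le\sqrt{R}\,\|u\|$. Since $\|\cdot\|$ is a genuine norm on $\ell^2(I,\mathbb{H})$, the ordinary triangle inequality and its reverse form apply verbatim; crucially, no commutativity of $\mathbb{H}$ is invoked, only the modulus $|\cdot|$ and the norm, so the passage from the complex to the quaternionic setting is automatic.

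Then I would combine these. Applying the identity $\langle v_i,u\rangle=\langle u_i,u\rangle-\langle u_i-v_i,u\rangle$ together with the triangle inequality yields
$$\big\|\{\langle v_i,u\rangle\}\big\|\le \big\|\{\langle u_i,u\rangle\}\big\|+\big\|\{\langle u_i-v_i,u\rangle\}\big\|\le\big(\sqrt{B}+\sqrt{R}\big)\|u\|=\sqrt{B}\Big(1+\sqrt{\tfrac{R}{B}}\Big)\|u\|,$$
and the reverse triangle inequality gives
$$\big\|\{\langle v_i,u\rangle\}\big\|\ge \big\|\{\langle u_i,u\rangle\}\big\|-\big\|\{\langle u_i-v_i,u\rangle\}\big\|\ge\big(\sqrt{A}-\sqrt{R}\big)\|u\|=\sqrt{A}\Big(1-\sqrt{\tfrac{R}{A}}\Big)\|u\|.$$
Squaring both lines, the lower one being legitimate precisely because $R<A$ forces $1-\sqrt{R/A}>0$, produces
$$A\Big(1-\sqrt{\tfrac{R}{A}}\Big)^2\|u\|^2\le\sum_{i\in I}|\langle v_i,u\rangle|^2\le B\Big(1+\sqrt{\tfrac{R}{B}}\Big)^2\|u\|^2$$
for every $u\in\mathcal{H}$, which is exactly the claimed frame property with the stated bounds.

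I do not anticipate a serious obstacle here: the only thing to watch is that every manipulation stays at the level of moduli and $\ell^2$-norms rather than of the quaternions themselves, so that the non-commutativity of $\mathbb{H}$ never intervenes. The hypothesis $R<A$ is used exactly once, namely to guarantee that the lower bound is strictly positive and hence that $\{v_i\}_{i\in I}$ is genuinely a frame and not merely a Bessel sequence.
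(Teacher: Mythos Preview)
Your proof is correct and entirely self-contained, but it follows a different route from the paper. The paper argues on the synthesis side: from the Bessel hypothesis it deduces that the pre-frame (synthesis) operator $T$ of $\{u_i-v_i\}_{i\in I}$ satisfies $\|T\|\le\sqrt{R}$, i.e.\ $\big\|\sum_{i\in I}(u_i-v_i)q_i\big\|\le\sqrt{R}\,\big(\sum_{i\in I}|q_i|^2\big)^{1/2}$ for all $\{q_i\}\in\ell^2(I,\mathbb{H})$, and then invokes a general perturbation theorem from \cite{11} (their Theorem~4.1 with $\lambda=0$, $\mu=\sqrt{R}$) as a black box. You instead stay on the analysis side and obtain the frame bounds directly from the triangle and reverse triangle inequalities in $\ell^2(I,\mathbb{H})$, never leaving the level of moduli and norms. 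Your approach is more elementary and avoids the external citation; the paper's approach has the advantage of plugging into a more general $(\lambda,\mu)$-perturbation framework, of which this lemma is the special case $\lambda=0$.
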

\begin{proof}
It is clear that $\{u_i-v_i\}_{i\in I}$ is  a Bessel sequence and its transform operator $T^*$ satisfy $\|T^*\|\leq \sqrt{R}$. Then, $\|T\|\leq \sqrt{R}$. Thus, for all $\{q_i\}_{i\in I}\subset \ell^2(\mathbb{H})$, we have:
$$\left\| \sum_{i\in I}(u_i-v_i)q_i\right\|\leq \sqrt{R}\left(\sum_{i\in I}\vert q_i\vert^2\right)^{\frac{1}{2}}.$$
Setting $\lambda=0$ and $\mu=\sqrt{R}$ in Theorem $4.1$ \cite{11}, we deduce that $\{v_i\}_{i\in I}$ is a frame for $\mathcal{H}$ with frame bounds: 
$$A\left(1-\sqrt{\frac{R}{A}}\right)^2,\;\; B\left(1+\sqrt{\frac{R}{B}}\right)^2.$$
\end{proof}

The following theorem studies the stability of Gabor frames with real-valued windows by using matrix-valued functions $\mathcal{M}_{.}$.
\begin{theorem}
Let $g:=\{g_l\}_{l\in \mathbb{N}_L},\, h:=\{h_l\}_{l\in \mathbb{N}_L}\subset \ell^2(\mathbb{Z}\times \mathbb{Z},\mathbb{R})$ such that $\mathcal{G}(g,L,M,N)$ is a frame for $\ell^2(\mathbb{Z}\times \mathbb{Z},\mathbb{H})$ with frame bounds $A\leq B$ and suppose that:
$$R:=M^2\max_{k\in \mathbb{N}_N^2}\sum_{p\in \mathbb{Z}^2}\left\vert \left(\sum_{l\in \mathbb{N}_L}\mathcal{M}_{g_l-h_l}(k)\mathcal{M}_{g_l-h_l}^t(k)\right)_{0_{\mathbb{Z}^2},p}\right\vert < A.$$
Then, $\mathcal{G}(h,L,M,N)$ is a frame for $\ell^2(\mathbb{Z}\times \mathbb{Z},\mathbb{H})$ with frame bounds: 
$$A\left(1-\sqrt{\frac{R}{A}}\right)^2,\;\; B\left(1+\sqrt{\frac{R}{B}}\right)^2.$$
\end{theorem}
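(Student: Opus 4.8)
The plan is to realize $\mathcal{G}(h,L,M,N)$ as a small perturbation of the given frame $\mathcal{G}(g,L,M,N)$ and then to invoke the perturbation result Lemma \ref{lemma}, using Theorem \ref{thmd} to supply the required control on the error family. First I would index both systems over $(l,n,m)\in \mathbb{N}_L\times \mathbb{Z}^2\times \mathbb{N}_M^2$ by setting $u_{l,n,m}:=E_{\frac{m}{M}}T_{nN}g_l$ and $v_{l,n,m}:=E_{\frac{m}{M}}T_{nN}h_l$, so that $\{u_{l,n,m}\}$ is exactly the hypothesized frame (with bounds $A\leq B$) and $\{v_{l,n,m}\}$ is the candidate frame $\mathcal{G}(h,L,M,N)$. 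The key observation is that, although $E_{\frac{m}{M}}$ is not right $\mathbb{H}$-linear, it is \emph{additive} (as is $T_{nN}$), so that pointwise on $\mathbb{Z}^2$ we have
$$u_{l,n,m}-v_{l,n,m}=E_{\frac{m}{M}}T_{nN}g_l-E_{\frac{m}{M}}T_{nN}h_l=E_{\frac{m}{M}}T_{nN}(g_l-h_l).$$
Consequently the family of differences $\{u_{l,n,m}-v_{l,n,m}\}$ is precisely the Gabor system $\mathcal{G}(g-h,L,M,N)$ generated by the real-valued windows $\{g_l-h_l\}_{l\in \mathbb{N}_L}$.

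Next I would apply the Bessel part of Theorem \ref{thmd} to these difference windows. Since $g_l-h_l\in \ell^2(\mathbb{Z}\times \mathbb{Z},\mathbb{R})$ for every $l$, that theorem shows $\mathcal{G}(g-h,L,M,N)$ is a Bessel system whose Bessel bound is exactly the quantity
$$R=M^2\max_{k\in \mathbb{N}_N^2}\sum_{p\in \mathbb{Z}^2}\left\vert \left(\sum_{l\in \mathbb{N}_L}\mathcal{M}_{g_l-h_l}(k)\mathcal{M}_{g_l-h_l}^t(k)\right)_{0_{\mathbb{Z}^2},p}\right\vert$$
appearing in the hypothesis. Translated into the language of Lemma \ref{lemma}, this means that for every $u\in \ell^2(\mathbb{Z}\times \mathbb{Z},\mathbb{H})$,
$$\sum_{l\in \mathbb{N}_L}\sum_{n\in \mathbb{Z}^2}\sum_{m\in \mathbb{N}_M^2}\left\vert \langle u_{l,n,m}-v_{l,n,m},u\rangle\right\vert^2\leq R\|u\|^2,$$
where I use that the inner product $\langle\cdot,\cdot\rangle$ is additive in its first slot to pass the difference inside the bracket.

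Finally, with the frame $\{u_{l,n,m}\}$ of bounds $A\leq B$, the perturbing family $\{v_{l,n,m}\}$, and the constant $R$ satisfying $0<R<A$ (the hypothesis) all in place, Lemma \ref{lemma} applies verbatim and yields that $\{v_{l,n,m}\}=\mathcal{G}(h,L,M,N)$ is a frame for $\ell^2(\mathbb{Z}\times \mathbb{Z},\mathbb{H})$ with frame bounds $A\left(1-\sqrt{\frac{R}{A}}\right)^2$ and $B\left(1+\sqrt{\frac{R}{B}}\right)^2$, as claimed. I expect the only genuinely delicate point to be the identification step: one must confirm that subtraction commutes with the modulation–translation operator even though $E_{\frac{m}{M}}$ fails to be $\mathbb{H}$-linear, and that the Bessel bound produced by Theorem \ref{thmd} for the difference windows coincides exactly with the prescribed $R$. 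Once these two matchings are checked, the conclusion is a direct concatenation of Theorem \ref{thmd} and Lemma \ref{lemma}, with no further estimation required.
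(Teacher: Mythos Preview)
Your proposal is correct and follows essentially the same route as the paper: apply Theorem~\ref{thmd} to the real-valued difference windows $\{g_l-h_l\}_{l\in\mathbb{N}_L}$ to obtain the Bessel bound $R$ for $\mathcal{G}(g-h,L,M,N)$, use the additivity of $E_{\frac{m}{M}}T_{nN}$ to identify this system with the family of differences $\{E_{\frac{m}{M}}T_{nN}g_l-E_{\frac{m}{M}}T_{nN}h_l\}$, and then invoke Lemma~\ref{lemma}. The paper also explicitly verifies the additivity step you flag as the delicate point, so your emphasis is well placed.
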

\begin{proof}
Applying Theorem \ref{thmd} on $g-h:=\{g_l-h_l\}_{l\in \mathbb{N}_L}$, we deduce that $\mathcal{G}(g-h,L,M,N)$ is a Bessel system with Bessel bound $R$. Hence, for all $f\in \ell^2(\mathbb{Z}\times \mathbb{Z},\mathbb{H})$, we have: 
$$\displaystyle{\sum_{l\in \mathbb{N}_L}\sum_{n\in \mathbb{Z}^2}\sum_{\in \mathbb{N}_M^2}\left\vert \langle E_{\frac{m}{M}}T_{nN}(g_l-h_l),f\rangle\right\vert^2}\leq R\|f\|^2.$$
Since $$\begin{array}{rcl}
E_{\frac{m}{M}}T_{nN}(g_l-h_l)(k)&=&e^{2\pi i\frac{m_1}{M}k_1}(g_l(k)-h_l(k))e^{2\pi j \frac{m_2}{M}k_2}\\
&=&e^{2\pi i\frac{m_1}{M}k_1}g_l(k)e^{2\pi j \frac{m_2}{M}k_2}-e^{2\pi i\frac{m_1}{M}k_1}h_l(k)e^{2\pi j \frac{m_2}{M}k_2}\\
&=&E_{\frac{m}{M}}T_{nN}g_l-E_{\frac{m}{M}}T_{nN}h_l.
\end{array}$$
Then, for all $f\in \ell^2(\mathbb{Z}\times \mathbb{Z},\mathbb{H})$, we have:
$$\displaystyle{\sum_{l\in \mathbb{N}_L}\sum_{n\in \mathbb{Z}^2}\sum_{\in \mathbb{N}_M^2}\left\vert \langle E_{\frac{m}{M}}T_{nN}g_l-E_{\frac{m}{M}}T_{nN}h_l,f\rangle\right\vert^2}\leq R\|f\|^2.$$
Hence, Lemma  \ref{lemma} completes the proof.
\end{proof}
\medskip

\section*{Acknowledgments}
	It is with great pleasure that I thank the referee for their careful reading of the paper and for several valuable suggestions.
	
	\section*{Ethics declarations}
	
	\subsection*{Availablity of data and materials}
	Not applicable.
	\subsection*{Conflict of interest}
	The author declares that hes has no competing interests.
	\subsection*{Fundings}
	Not applicable.
	
	\medskip

\end{document}